\documentclass[11pt,reqno]{amsart}
\usepackage{amscd,amsmath,amsopn,amssymb,amsthm,multicol}
\usepackage{tikz,subdepth,anysize,verbatim,ifthen,xargs,colortbl,float}
\usepackage{longtable,mathtools,hyperref}
\usepackage[english]{babel}
\everymath=\expandafter{\the\everymath\displaystyle}

\theoremstyle{plain}
\newtheorem{theorem}{Theorem}
\newtheorem{prop}{Proposition}

\newtheorem{remark}{Remark}
\theoremstyle{definition}
\newtheorem{definition}{Definition}

\newcommand\com[1]{}
\newcommand\C{{\mathbb C}}

\newcommand\E{\mathcal{E}}
\newcommand\g{{\mathfrak g}}
\newcommand\h{\mathfrak{h}}

\newcommand\op[1]{\mathop{\rm #1}\nolimits}
\newcommand\p{\partial}

\newcommand\R{{\mathbb R}}
\newcommand\Z{{\mathbb Z}}

\begin{document}

\title[On radicals in differential invariants]{On radicals in differential invariants: \\ global theory vs moving frames}
\author{Boris Kruglikov}
\address{Department of Mathematics and Statistics, UiT the Arctic University of Norway, Troms\o\ 9037, Norway.
\ E-mail: {\tt boris.kruglikov@uit.no}. }

 \begin{abstract}
The classical technique of moving frames allows to effectively compute differential invariants of group actions. 
A standard observation is that the expressions of differential invariants contain radicals and more complicated 
algebraic functions, however the global Lie-Tresse theorem claims that for algebraic pseudogroup actions 
the algebra of differential invariants is generated by rational functions in jet variables. 
Here, we explain this apparent contradiction and illustrate it in a few well-elaborated examples.
Constructively, our result gives a method to algorithmically compute global differential invariants.
 \end{abstract}

\maketitle

\section{An overview of the problem and the result}\label{S1}

Differential invariants of continuous group actions were introduced and studied by Sophus Lie \cite{Lie2}. 
He and his student Arthur Tresse proved the first (local) version of finite-generation of the algebra of invariants \cite{Lie1,Tr1}.
This Lie-Tresse theorem was later generalized in several directions \cite{C3,Ku,Ov,OP,KL2,OPV}.
The methods of computing differential invariants varied from parameter exclusions to infinitesimal actions.

One of the most widespread tools, the method of moving frames, was elaborated by Élie Cartan \cite{C1}, see also \cite{Gri,Gre,FO1}.
It is based on normalizations of group parameters via cross-sections to the orbits in the phase space. 
Numerious applications of this method produce differential invariants involving radicals, as is clearly seen from
computations of fundamental invariants and basic derivations in various geometries \cite{T,Lie1,Lie2,CSW,MB,KL1,MMR}.
For instance, such are the curvature and the differential parameter of smooth curves $y=y(x)$ in the Euclidean plane $\R^2(x,y)$
 $$
\kappa=\frac{y_{xx}}{(1+y_x^2)^{3/2}},\qquad \frac{d}{ds}=\frac1{\sqrt{1+y_x^2}}\frac{d}{dx},
 $$
which were already discussed in \cite{KL2} to be non-invariant with respect to the moton group $SO(2)\ltimes\R^2$.

This occurence of radicals is at odds with the global Lie-Tresse theorem obtained in \cite{KL2}, which
states that the algebra of differential invariants is generated by rational functions in jet variables for algebraic pseudogroup 
actions transitive on the base manifold. In the above example the generators of the algebra of global invariants are
 $$
K=\kappa^2,\qquad \nabla=\kappa\cdot \frac{d}{ds},
 $$
that is, every global differential invariant can be expressed through a rational combination of $K,\nabla K,\nabla^2K,\dots$
In other words, these rational differential invariants are coordinates on the quotient equation.

An explanation of this apparent contradiction lies in the fact that the differential invariants produced by the method
of moving frames are not global in jets. 
(In the particular case of curves in the Euclidean plane, global invariants form a subfield of degree 4 in
the field of local invariants generated by $\kappa,\kappa_s,\kappa_{ss},\dots$)
In fact, the cross-sections introduced in normalization conditions are often only local transversals, 
and what is called free actions in the literature, as a base for applications of the moving frames, 
turns out to be almost locally free actions. This explains the locality and the multitude of roots in the classical formulae.

In fact, already Élie Cartan noted in his seminal five-variables paper \cite{C0}, after introducing the invariant $I$
in the particular case of 4-multiple root for his fundamental quartic of (2,3,5)-distributions:
 \begin{quote}
 {\it\small ``Il est bon de remarquer qu'on est arrivé à l'invariant $I$ par des extractions de racine $4^\text{e}$
 et de racine carrée, de sorte qu'en realité ce n'est pas $I$ lui-meme qui est un invariant, 
 mais une de ses puissances.''}
 \end{quote}
Then he mentioned the importance of using {\it ``invariants rationels''}. Yet, up to this date, a majority of 
investigations on differential invariants is based on radical expressions.

In this note we first review the method of moving frames, as was formulated by Élie Cartan and then refined and 
expanded by Shiing-Shen Chern, Phillip Griffiths, Peter Olver, Mark Fels and others, see \cite{Gri,Gar,FO2}.
Afterwards we overview examples and explain a relation of the algebras of global and local differential invariants;
the later is known in the literature as the the algebra of differential invariants. 
The approach to global differential invariants is based on jonit work with Valentin Lychagin \cite{KL2}.

We compare the two theories on their common intersection ground. Thus we assume that the invariants via moving frames
are obtained through invariantization of the field of rational functions on a local semi-algebraic section $\Sigma'$.
In purely algebraic context a relation between moving frames and rational invariants was clarified in \cite{HK1,HK2} with 
an emphasys on constructive methods.
In more general differential context we get the following result (the assumptions on the pseudogroup and the action 
will be explained and justified afterwards; they are required for all objects to be well-defined); we work over $\R$ or $\C$.

 \begin{theorem}\label{Th1}
Consider an algebraic Lie pseudogroup $G$ acting transitively on a manifold $M$ with an induced action on the space of jets 
$J^\infty$, which eventually (from some jet-level) is almost locally free, or on a compatible  
differential equation $\E$, considered as an invariant (profinite, co-filtered) submanifold therein.

Let $\mathcal{A}$ be the field of global rational differential invariants and let $\mathcal{I}$ be the field of
local differential invariants obtained by the method of moving frames. Then $\mathcal{I}$ is an algebraic extension 
of $\mathcal{A}$. 
 \end{theorem}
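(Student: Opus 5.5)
We work jet-level by jet-level and pass to the inductive limit at the end. Fix $k$ large enough that the action of $G_k$ (the finite-dimensional algebraic group of $k$-jets of pseudogroup transformations) on $J^k$, or on $\E_k$, is almost locally free. By the global Lie-Tresse theorem of \cite{KL2}, the field $\mathcal{A}_k$ of rational $G_k$-invariant functions on $J^k$ separates generic orbits. By Rosenlicht's theorem, its transcendence degree equals $\dim J^k-\dim(\text{generic orbit})=\dim J^k-\dim G_k$; the last equality uses that almost local freeness makes the stabilizers finite. On the other hand, the moving frame field $\mathcal{I}_k$ is obtained by invariantizing the rational functions on a local semi-algebraic cross-section $\Sigma'_k$ of dimension $\dim J^k-\dim G_k$. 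It is generated by the restrictions to $\Sigma'_k$ of the jet coordinates, transported along orbits via the local moving frame $\rho:U\to G_k$. So both fields have the same transcendence degree, and the task is to show $\mathcal{A}_k\subset\mathcal{I}_k$ with $\mathcal{I}_k$ algebraic over $\mathcal{A}_k$.

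\textbf{Inclusion.} Any global rational invariant $f\in\mathcal{A}_k$ is constant along orbits. Therefore $f=\iota(f)=f|_{\Sigma'_k}\circ\pi$, where $\pi$ is the local projection to the section along orbits. Hence $f$ is the invariantization of a rational function on $\Sigma'_k$, so $\mathcal{A}_k\subset\mathcal{I}_k$ on the domain $U$. Here we identify each field with a field of functions on a common open set, which is legitimate because rational functions are determined by their restriction to a nonempty open set.

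\textbf{Algebraicity.} Take a generator $I=\iota(h)$ of $\mathcal{I}_k$, with $h$ rational on $\Sigma'_k$. For a generic point $x$, the orbit $G_k\cdot x$ meets the Zariski closure $\overline{\Sigma'_k}$ in finitely many points $s_1(x),\dots,s_N(x)$. This finiteness holds because the orbit and $\overline{\Sigma'_k}$ have complementary dimensions, the intersection is transversal at the chosen point, and the intersection number is generically constant since both are algebraic. The local moving frame selects one of these points, $s_1$. The elementary symmetric functions $\sigma_j(h(s_1),\dots,h(s_N))$ are invariant and do not depend on the choice of branch. They are algebraic functions on $J^k$, and I will argue they are in fact rational: the incidence variety $\{(x,s):s\in G_k x\cap\overline{\Sigma'_k}\}$ is algebraic and finite over $J^k$, so the symmetric functions of $h$ along the fibers descend to rational functions (norms and traces of a finite extension). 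Thus $\sigma_j\in\mathcal{A}_k$, and $I$ satisfies $\prod_i(T-h(s_i))=0$, a degree-$N$ polynomial with coefficients in $\mathcal{A}_k$. Consequently $\mathcal{I}_k/\mathcal{A}_k$ is algebraic. The field extension is finitely generated, so it is in fact finite, which is consistent with the degree-4 example.

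\textbf{Passage to the limit.} Compatibility of the cross-sections across jet orders gives $\mathcal{I}=\bigcup_k\mathcal{I}_k$ and $\mathcal{A}=\bigcup_k\mathcal{A}_k$, so algebraicity passes to $\mathcal{I}/\mathcal{A}$. For the differential equation $\E$, the same argument runs on the invariant subvarieties $\E_k$, using the Lie-Tresse theorem for $\E$ from \cite{KL2}.

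\textbf{Main obstacle.} I expect the hardest step to be the rationality of the symmetric functions $\sigma_j$, that is, that the descent is rational rather than merely algebraic. This needs the incidence map to be finite (proper, with generically constant fiber cardinality) over a Zariski-open invariant set, even though $\Sigma'_k$ is only semi-algebraic and the group only almost locally free. Over $\R$, orbit intersections with the real section may vary in number. If so, I would first complexify and use the $\C$-case, then observe that the real invariants are the real points of these rational functions. As a fallback that avoids descent altogether, I would instead compare transcendence degrees: since $\mathcal{A}_k\subset\mathcal{I}_k$ and both have transcendence degree $\dim J^k-\dim G_k$, the extension $\mathcal{I}_k/\mathcal{A}_k$ is algebraic, provided $\operatorname{trdeg}\mathcal{I}_k$ is bounded by the section dimension. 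That bound is immediate, because $\mathcal{I}_k$ is isomorphic to a field of rational functions on $\Sigma'_k$.
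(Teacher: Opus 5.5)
Your proof is correct, but it is organized differently from the paper's.

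\textbf{The paper's route.} It first uses transitivity on $M$ to pass to the algebraic action of the isotropy group $G^{k+r}_x$ on the fiber $\E^k_x$. You should do the same: for an infinite-dimensional pseudogroup your $G_k$ is a groupoid rather than a group, so Rosenlicht's theorem has to be applied fiberwise, and $\dim J^k-\dim G_k$ should be read as the codimension of a generic orbit in $\E^k_x$. Nothing else in your argument changes. The paper then cites \cite{PV,HK2} for the fact that rational functions on a quasi-section form a finite extension of the rational invariants. The degree of this extension equals the generic number of points in which an orbit meets the quasi-section. The real case is handled by complexification. Finally, the paper uses stabilization of singularities: from level $\ell$ on the action is free affine, so the degrees $d_k$ stop growing and $[\mathcal{I}:\mathcal{A}]=d_\ell$.

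\textbf{What you do differently.} Your norm/trace argument on the incidence correspondence is a hands-on proof of the cited algebraic fact. Your transcendence-degree fallback is simpler still and makes your ``main obstacle'' disappear. Once $\mathcal{A}_k\subset\mathcal{I}_k$ inside the functions on $U$, both fields have transcendence degree equal to the generic orbit codimension: by Rosenlicht for $\mathcal{A}_k$, and via $\mathcal{I}_k\cong\R(\Sigma_k)$ for $\mathcal{I}_k$. Algebraicity is then automatic, with no descent and no intersection count needed.

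\textbf{What each route buys.} Your passage to the limit via $\mathcal{I}=\bigcup_k\mathcal{I}_k$ gives exactly the algebraicity stated in the theorem, without invoking stabilization of singularities. What it cannot give is a uniform bound on $[\mathcal{I}_k:\mathcal{A}_k]$. The paper extracts finiteness of $[\mathcal{I}:\mathcal{A}]$ in the same proof and relies on it for the finite deck group in Proposition~\ref{Prop1} and Theorem~\ref{Th2}. That finiteness genuinely needs the paper's stabilization step.
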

 
For finite-dimensional Lie groups the action eventually becomes locally free \cite{AO} 
(the assumption of analyticity in \cite{AO} follows from ours\footnote{Note however an example in 
\cite[\S3.3]{KS+} where the prolongation of a Lie group $G$ action on an equation $\E$ is not free.}).
For infinite-dimensional Lie pseudogroups the condition that the action eventually becomes free is non-vacuous,
though in most interesting examples it is satisfied \cite{OP,OPV}.

To the other direction, we often can realize $\mathcal{A}$ as the fixed point set of the action of a finite group $\Gamma$
of automorphisms on $\mathcal{I}$, which we will call the {\em deck group}. 
Over $\C$ this happens as $\Gamma$ is the Galois group of the extension,
and it often can be chosen the stabilizer of a quasi-section $\Sigma$, which is the Zariski closure 
of the local section $\Sigma'$ used in the normalization of the moving frame. 

Over $\R$ the stabilizer subgroup of the section $\Sigma$ may be a proper subgroup of the deck group 
$\Gamma_{\!{\sf R}}\subset\Gamma$, as we will see in examples. 
The fixed field $\mathcal{J}$ of the $\Gamma_{\!{\sf R}}$ action on $\mathcal{I}$ separates generic orbits of $G$, yet it may 
contain radicals, see Remark \ref{rk2} (a possibility of roots in global differential invariants was mentioned in \cite{KL2}). 
The corresponding field extensions are nested:
 \begin{equation}\label{AJI}
\mathcal{A}\subset\mathcal{J}\subset\mathcal{I}. 
 \end{equation}

By \cite[Theorem 1]{KL2} singularities of algebraic pseudogroup actions stabilize in finite order:
for some $\ell$ and Zariski open dense subset $\E_o^\ell\subset\E^\ell$ the action of $G$ on 
$\pi_{\ell+k,\ell}^{-1}(\E_o^\ell)$ admits a geometric quotient.
The following adapts the setup of Theorem \ref{Th1}. We will give detals on quasi-sections and
$\Gamma$-sections in the next sections.

 \begin{theorem}\label{Th2}
Let $\ell$ be the order of stabilization of singularities of the $G$ action on $\E$. Assume that stabilizers of generic points
of $\E^\ell$ belong to the same conjugacy class in $G$. Then there exists a quasi-section that is a $\Gamma$-section 
of the action for a finite subgroup $\Gamma\subset G$. (In particular, this is so if $G$ has one open orbit in $\E^\ell$.)

More generally, provided a $\Gamma$-section on $\E^\ell$ exists, the group $\Gamma$ can be chosen as its stabilizer. 
In this case $\mathcal{A}$ is the fixed point set of the action of $\Gamma$ by automorphisms of $\mathcal{I}$.
 \end{theorem}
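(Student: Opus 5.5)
The plan is to work on the finite-order level $\E^\ell$, where by \cite[Theorem 1]{KL2} the action of $G$ on the Zariski open set $\E^\ell_o$ admits a geometric quotient. Let $q:\E^\ell_o\to\E^\ell_o/G$ be the quotient map; its function field is the field of rational invariants of order $\le\ell$. Since the action is eventually almost locally free, generic orbits in $\E^\ell_o$ have a common dimension $r$. Set $n=\dim\E^\ell-r$.

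\textbf{Step 1: constructing the quasi-section.} I would take an irreducible subvariety $\Sigma\subset\E^\ell$ of dimension $n$ that is transversal to the generic orbits at some point, for instance one cut out by $r$ generic algebraic normalization equations, exactly as in the moving frame method. Then take its Zariski closure. Transversality at one point makes $q|_\Sigma:\Sigma\to\E^\ell_o/G$ dominant between varieties of equal dimension, so it is generically finite. Hence $\Sigma$ is a quasi-section, and its degree $d$ is the number of intersection points of a generic orbit with $\Sigma$.

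\textbf{Step 2: obtaining the finite group $\Gamma$.} This is where the conjugacy hypothesis is used. Pick a generic point $x\in\Sigma$ with stabilizer $H=G_x$. Since stabilizers of generic points are all conjugate to $H$, I would choose $\Sigma$ inside the fixed-point locus of $H$, that is $\Sigma\subset(\E^\ell)^H$, and transversal to orbits there. This is the Luna--Richardson type slice, and it works because the normalizer $N(H)$ acts on $(\E^\ell)^H$ with quotient equal to $\E^\ell_o/G$ generically.

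Two points of $\Sigma$ on the same orbit are then related by an element of $N(H)$. By dimension count the component group acting on the fiber is $W=N(H)/H$ restricted to its action on $\Sigma$. Shrinking $\Sigma$ by adding further normalizations, one can arrange that the elements $g$ with $g\Sigma=\Sigma$ act transitively on generic fibers of $q|_\Sigma$. These elements form a finite group $\Gamma\subset G$, after taking a finite lift to $G$, which is possible in the algebraic setting. This makes $\Sigma$ a $\Gamma$-section.

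When there is one open orbit, $\Sigma$ is a point, and $\Gamma$ can be taken trivial, or as a finite subgroup of the stabilizer, so the claim is immediate. The main obstacle I expect is realizing the fiber-permuting elements by a subgroup of $G$ that preserves $\Sigma$ globally, rather than only pointwise elements that vary with the point. I would first try to handle this using the constancy of the stabilizer type, so that the elements are locally constant in the orbit parameter.

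\textbf{Step 3: identifying $\mathcal{A}$ with the fixed field.} Given a $\Gamma$-section, restriction gives a map $\mathcal{A}^{\le\ell}=\C(\E^\ell_o/G)\to\C(\Sigma)$. Its image lies in $\C(\Sigma)^\Gamma$, since the functions are $G$-invariant. Conversely, $\Sigma/\Gamma\to\E^\ell_o/G$ is birational, because $\Gamma$ acts transitively on generic fibers. So $\C(\Sigma)^\Gamma=\C(\E^\ell_o/G)$.

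The moving frame field $\mathcal{I}$ is the invariantization of $\C(\Sigma)$, together with the higher-order jet coordinates restricted over $\Sigma$. Prolonging, I would consider $\Sigma^{(k)}=\pi^{-1}_{\ell+k,\ell}(\Sigma)$, which is a $\Gamma$-invariant slice in $\E^{\ell+k}$ because the action is free over $\E^\ell_o$ beyond level $\ell$. Over $\C$, Galois theory then gives $\mathcal{I}^\Gamma=\mathcal{A}$ at every order, and one passes to the limit in $k$.
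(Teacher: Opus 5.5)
\textbf{There is a genuine gap: your Step~2 does not establish the existence claim, and two further steps are wrong as stated.}

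Your overall route is the paper's. You place $\Sigma$ in the fixed locus of the generic stabilizer $H$ and use $N(H)$ as the symmetry group; this is the Katsylo--Popov--Vinberg criterion, Proposition~\ref{KatPV}, which the paper applies levelwise up to $\ell$. You then read off $\mathcal{A}$ as the $\Gamma$-invariant rational functions on $\Sigma$ via \eqref{CMGG}, and your Step~3 restriction argument is essentially the paper's proof of the second part.

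The gap in Step~2 is exactly the point you flag and leave open. You observe that two points of $\Sigma\subset(\E^\ell)^H$ on one orbit differ by an element of $N(H)$. You then cut $\Sigma$ by further normalizations and assert three things: the elements preserving it form a finite group, they act transitively on generic fibres, and they lift to a finite subgroup of $G$. None of this is argued, and it is the whole content of the existence claim.

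The shrinking also works against you. Generic extra equations destroy the $N(H)$-stability of $\Sigma$, after which conditions (ii) and (iii) typically cannot be met by one and the same group. That is precisely where elements ``varying with the point'' come from. Nor can $\Gamma$ be chosen by lifting: condition (iii) forces $\Gamma$ to contain every $g$ with $g\Sigma_o\cap\Sigma_o\neq\emptyset$, in particular all of $H$.

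The paper does not shrink. It takes $\Sigma$ to be the union of the top-dimensional components of $(\E^\ell_o)^H$, where $\E^\ell_o$ is the open set on which all stabilizers are conjugate to $H$, and sets $\Gamma=N(H)$. Then (ii) is clear and (iii) takes one line. For $x\in\Sigma_o$ we have $H\subseteq G_x$ with $G_x$ conjugate to $H$, so $G_x=H$. Hence $gx=y\in\Sigma_o$ gives $gHg^{-1}=G_y=H$, i.e.\ $g\in N(H)$. Your own sentence about $N(H)$ already is this argument, provided you stop there.

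Be aware, though, that when $N(H)/H$ is not finite this $\Sigma$ is not a quasi-section. A typical case is $H=\{e\}$, where the fixed locus is all of $\E^\ell$. Obtaining a quasi-section with finite $\Gamma$ then needs a genuine reduction argument. Your proposal does not supply one, and the paper's argument does not carry one out either.

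Two further points are wrong as stated.

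\emph{The one-open-orbit case.} Here $\Sigma=\{p\}$ at level $\ell$, and (iii) forces $\Gamma$ to be the full stabilizer $G_p$, which is finite once the action is locally free at that level. It can be neither trivial nor a proper subgroup. This $G_p$ is exactly the source of the radicals. For plane projective curves over $\C$ it is the group of scalings $(x,y)\mapsto(ax,a^2y)$ with $a^3=1$ fixing the normalized $6$-jet, so $\Gamma=\Z_3$. Taking $\Gamma$ trivial would give $\mathcal{A}=\mathcal{I}$, contradicting $[\mathcal{I}:\mathcal{A}]=3$.

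\emph{The prolongation in Step~3.} The preimage $\pi^{-1}_{\ell+k,\ell}(\Sigma)$ is a quasi-section only when no new group parameters act above level $\ell$, e.g.\ for a finite-dimensional group already locally free on $\E^\ell$. For a genuine pseudogroup, freeness means precisely that the new parameters (such as $a_{k+2},b_{k+2}$ in Section~\ref{S7}) act freely on the new jet fibres. So $\pi^{-1}_{\ell+k,\ell}(\Sigma)$ meets orbits in positive-dimensional sets, and ``because the action is free'' is exactly why it fails. The paper instead extends $\Sigma^\ell$ by normalizing these free affine actions. This produces affine bundles $\Sigma^{k+1}_o\to\Sigma^k_o$ with stabilizer still $\Gamma^\ell$, which is what makes $\Gamma$ and $[\mathcal{I}:\mathcal{A}]$ stabilize at level $\ell$.
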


Note that (except for finite type equations $\E$) the prolongation $\E^\infty$ is infinite-dimensional,
so the finiteness of $\Gamma$ as well as that for $[\mathcal{I}:\mathcal{A}]$ does not follow from algebraic theory 
(also note that both fields of invariants have infinite transcendence degree) but 
is a corollary of stabilization of singularities. 
 
The above theorem yields an effective method to compute the algebra 
of global differential invariants from local differential invariants, also known as Cartan invariants, in general.
The main result can be summarized as the following formula for the algebra of global differential invariants
(where $\mathcal{R}$ is the field of rational functions on $\E$)
 \begin{equation}\label{AIG}
\mathcal{A}=\mathcal{R}^G=\mathcal{I}^\Gamma.
 \end{equation}
We introduce the necessary background in the next sections, where we also prove the main results.
 
Afterwards we consider several examples that illustrate the theorems. We start with differential invariants
of un-parametrized curves in the Euclidean space (that can be also considered parametrized by a natural parameter),
then we consider curves in flat conformal and projective spaces. Differential invariants for these problems are
known, and one can observe radicals in their expressions. We explain all those radicals through
the result of Theorem \ref{Th1}, compute the deck group $\Gamma$ and then express global differential invariants. 
Real curves in generalized flag varieties $G/P$ through the method of moving frames were discussed in \cite{DZ},
these possess the complexification as complex curves in complex flag varieties $G_\C/P_\C$.

Jets of curves in various Klein geometries are among the simplest possible infinite-dimensional spaces. One can also 
consider Lie groups acting on higher-dimensional submanifolds. A classical example is given by surfaces in the
Euclidean 3-space. The principal curvatures $k_1,k_2$ (differential invariants of order 2) are eigenvalues of
the Weingarten (shape) operator. These contain a square root, which can be explained through Theorem \ref{Th2},
as the swap operator $k_1\leftrightarrow k_2$ generates $\Gamma'=\Z_2$.
The global invariants obtained from those by symmetrization are: 
the Gaussian curvature $K=k_1k_2$ and the mean curvature $H=(k_1+k_2)/2$.
The invariant derivations may be taken as normalized principal directions $e_1,e_2$, but those are defined up to $\pm$,
whence $\Gamma'$ extends to $\Gamma=\Z_2^{\times3}$ (for oritentation preserving motions $G=SO(3)\ltimes\R^3$) 
and the global invariants are obtianed by $\Gamma$-averaging $e_i(k_j)$ and further iterated derivatives.

We also consider an example of infinite-dimensional Lie pseudogroup acting on surfaces, for which the theorem applies.
We finish with an outlook, comparing our result with the Chevalley theorem on Casimir invariants. 
Both results allow reducing the computation of the algebra of invariants to averaging 
with respect to a finite group, which is computationally feasable. 

We will discuss (pseudogroup) actions exclusively over the fields $\R$ or $\C$.
While algebraic groups are best dealt with over $\C$, our computations are over $\R$ to refer to the familiar context.
Complexifications of those computations are straighforward, and we'll make them explicit in examples. 
However the complex and real version may differ in the corresponding deck group $\Gamma$, as the real 
extension may not be Galois, see Remark \ref{rk3}.
Also, in general, different real versions may vary in the manner the invariants are generated.

\section{Moving frames via quasi-sections}\label{S2}

We begin with classical invariants of an action of a finite-dimensional Lie group $G$ on a finite-dimensional manifod $M$.
In general, such an action is assumed smooth or holomorphic.
An equivariant moving frame may be defined as a $G$-equivariant map from $M$ (or rather an open subset $U\subset M$)
to $G$, see \cite{Gre,Gri,FO1,FO2}. 
Since the action of $G$ on itself is free, such should be also the action of $G$ on $M$, 
as a necessary condition for the existence of a moving frame.
Thus for the case, when a stabilizer $G_x$ of a point $x\in M$ is a Lie subgroup in $G$ of positive dimension,
the method of moving frames is not applicable (even locally).

However, even in the simplest case when the action is free, the section of the orbit foliation, 
required to construct a moving frame, may not exist. In fact, this case corresponds to a principal $G$-bundle $M\to B$, 
for which the obstructions to the existence of global sections belong to $H^k\bigr(B,\pi_{k-1}(G)\bigl)$ for various $k>0$.

A more involved case is that of locally free actions (that is, when the stabilizer $G_x$ of every point $x\in M$ is discrete in $G$)
and almost locally free actions (when the same holds for a generic point $x\in M$). The latter case is reduced to the former
by a removal of singularities. To deal with the former, we restrict (here and in the following) to the {\em algebraic actions}
$G\times M\to M$.

 \begin{definition}
An irreducible subvariety $\Sigma\subset M$ is called a rational section, resp.\ quasisection, of the action of $G$
if there exists a (Zariski) open nonempty subset $\Sigma_o\subset\Sigma$ with open dense $M_o=G\cdot\Sigma_o\subset M$
such that any orbit of the induced action of $G$ on $M_o$ intersects $\Sigma_o$ exactly once, resp.\ at finitely many points. 
 \end{definition}
 
If $\nu:M\to N$ is a rational quotient of the action of $G$, stemming from the Rosenlicht theorem \cite{R}, then 
$\rho:\Sigma\to N$ is a rational covering. More generally, representing the quasi-section by a rational mapping
$\sigma:Z\to M$ with $\Sigma=\sigma(Z)$ we get a rational covering $\rho\circ\sigma:Z\to N$, 
which may be chosen to be Galois\footnote{While this choice of $Z$ is not minimal (not a rational embedding) 
it uniformizes the source $Z$ up to birational equivalence.}.
 
It is seldom that (global) sections exist (see \cite[\S2.5]{PV} where obstructions in Galois cohomology are identified).
Customary, in the moving frame method, the manifold $M$ is therefore restricted to an open set $U\subset M$ in 
set-theoretic (smooth or analytic, but not Zariski) topology, and sometimes $\Sigma$ is also restricted to $\Sigma'$, 
such that $G\cdot\Sigma'\supset U$. In this case $G$ becomes local Lie group (or groupoid) on $U$ and invariant
functions are identified with elements of proper functional space $F(\Sigma)$: due to unicity of interseciton there is 
a unique extension of a function on the section to an invariant function; this is called the {\em invariantization} procedure.

By \cite[Proposition 2.7]{PV} the action always admits a quasi-section $\Sigma$, and it transversally intersects 
almost every orbit $G\cdot x$, with degree $0<|(G\cdot x)\cap\Sigma|<\infty$. One then takes $\Sigma'$ to be the
part of $\Sigma$ with transversal intersections, and $U$ a ``collar'' of $\Sigma'$ consisting of parts of $G$-orbits
that do not intersect. This $U$ is not $G$-invariant, but admits a local geometric quotient. 

\medskip

Next, consider an action of a Lie group $G$ on an infinite-dimensional manifold $\mathfrak{M}$, more precisely
a diffiety in the terminology of \cite{V}. This latter may be the space of jets $J^\infty(\R,M)$ of parametrized curves in $M$ 
($\R$ is a sample source; in periodic case it is $S^1$; for complex curves it is $\C$, or $S^2=\C P^1$ or higher genus curve)
or the space of jets of unparametrized curves $J^\infty(M,1)$, higher dimensional submanifolds $J^\infty(M,k)$,
jets of functions $J^\infty(M,\R)$, sections of bundles $J^\infty(M,E)$, etc; see \cite{KL1,O} for a general setup. 
Also $\mathfrak{M}$ may be a differential equation viewed geometrically as a (co-filtered) submanifold $\E$ in one of these jet spaces. In general, a diffiety is a scheme based on equations $\E$ as charts \cite{V}.

What is important is that $\mathfrak{M}=\lim\mathfrak{M}^k$ is a projective limit of finite-dimensional submanifolds,
in the simplest case $\mathfrak{M}^k=J^k$. 
A Lie pseudogroup $G$ may be considered as a diffiety of its own, compatible with the group operations \cite{KL1}.
The action of $G$ on the base $M$ is naturally prolonged to the action on $\mathfrak{M}$. 
Following \cite{KL2} we assume that the action is transitive on $M$ and algebraic on $\mathfrak{M}$,
the latter means algebraicity of the action in fibers of the projection $\mathfrak{M}^k\to M$ for every $k$. 
The fibers over a point $x\in M$ is $\mathfrak{M}^k_x$ and the stabilizer group $G^{k+r}_x$ (algebraically) acts on it, 
if the order of the action is $r$.

If the pseudogroup is infinite-dimensional, the action of $G^{k+r+1}_x$ on $\mathfrak{M}^k_x$ is non-effective 
and hence non-free for any $k$. Following \cite{OP} the action is called (almost locally) free if such is the action 
of $G^{k+r}_x$ on $\mathfrak{M}^k_x$ for every sufficiently large $k$ and each $x\in M$.
In this case the quasi-sections $\Sigma^k_x$ can be chosen compatibly: $\pi_{k+1,k}(\Sigma^{k+1}_x)=\Sigma^k_x$
for all $k$ (see \cite{OP} for details and \cite{OPV} for comparision to the Cartan approach). 
Due to this, the collection $\Sigma^k$ over $M$ can be considered as a diffiety itself.

In the moving frame approach, this $\Sigma$ is considered as a quotient equation: functions on $\Sigma$
uniquely extend to differential invariants \cite{FO2}, so their algebra is identified with 
 \begin{equation}\label{IFS}
\mathcal{I}=F(\Sigma),
 \end{equation}
where $F=C^\infty$ in the smooth context, however in the algebraic context it is the field $\mathcal{R}$ of rational functions. 
Note that if we use semi-algebraic $\Sigma'$ for normalization, we still get $\mathcal{I}\simeq F(\Sigma')$, as rational
functions unquely extend from an open set to its Zariski closure. 

Formula \eqref{IFS} represents differential invariants only locally. 
Global differential invariants emerge only from functions on $\Sigma$ that take the same values at different intersections
of an orbit with the quasi-section and a basis of such can be chosen rational.
This gives the global quotient equation as a diffiety through its field of functions up to birational equivalence. 
By \cite[Theorem 2]{KL2} this field $\mathcal{A}$ is finitely generated by a fininte number of differential invariants and 
a fininte number of invariant derivations.

 \begin{proof}[Proof of Theorem \ref{Th1}]
If $G$ acts algebraically on $M$ and $\Sigma$ is a quasi-transversal then,
by the proof of \cite[Proposition 2.7]{PV} and by \cite[Theorem 2.17]{HK2}, 
the field of rational functions $\C(\Sigma)$ is a finite extension of the field $\C(M)^G$ of rational invariants,
with the extension degree $d$ equal to the number of intersections of generic orbit $G\cdot x$ with $\Sigma$
(by \cite[Proposition 3.2]{HK1} this intersection is equal to $d$ on a Zariski open set $\Sigma_o$).

While these references restrict to an algebraic closed base field, i.e.\ $\C$ in our case, the conclusions hold true for $\R$ as well
(while most sources assume algebraic closure of the base field, Rosenlicht's argument \cite{R} applies to the field of definition 
and can be made constructive \cite{HK1,Kem}). 
It is however easier to argument it as follows. Given a real algebraic action, consider its complexification. Then
$\mathcal{I}_\C$ is an algebraic extension of $\mathcal{A}$. But since $\mathcal{I}$ is an intermediate field, it is
a finite extension and hence an algebraic extension of $\mathcal{A}$.

Now apply this to the action of the stabilizer $G_x^{k+r}$ acting on $\E^k_x$ for arbitrary $x\in M$,
taking into account that the invariants of $G^\infty$-action on $\E^\infty$ are in one-to-one with
the invariants of $G^\infty_x$-action on $\E^\infty_x$. Since this action
is algebraic we conclude that $\mathcal{I}^k$ is an finite extension of $\mathcal{A}^k$ of some degree $d_k$ 
growing with $k$ up to $k=\ell$. 
From this stabilization jet-level the action is free affine \cite{OP,KL2}, and so (now infinite) extensions 
of $\mathcal{I}^{\ell+j}$ of $\mathcal{I}^\ell$ and $\mathcal{A}^{\ell+j}$ of $\mathcal{A}^\ell$ are equivalent.
Thus the extension $\mathcal{I}=\mathcal{I}^{\infty}$ of $\mathcal{A}=\mathcal{A}^\infty$ has finite degree $d_\ell$.
 \end{proof}

\section{$\Gamma$-sections and differential invariants}\label{S3}

Now let us discuss the deck group $\Gamma$.

 \begin{prop}\label{Prop1}
Over $\C$ the field $\mathcal{I}$ is a Galois extension of $\mathcal{A}$. 
 \end{prop}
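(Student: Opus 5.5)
The plan is to reduce to finite jet order, then prove separability and normality separately. At finite order the statement becomes one about a rational covering $\rho:\Sigma\to N$ onto the Rosenlicht quotient. The step I expect to cause trouble is normality.

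\emph{Reduction to finite order.} As in the proof of Theorem~\ref{Th1}, I replace the $G$ action on $\E^\infty$ by the algebraic action of the stabilizer $G^{k+r}_x$ on the fibre $\E^k_x$. For $k\ge\ell$ the action is free affine along the fibres of $\pi_{\ell+j,\ell}$. Hence $\mathcal{I}^{\ell+j}$ is obtained from $\mathcal{I}^\ell$ by adjoining the same purely transcendental generators (the invariantized higher jet coordinates) that produce $\mathcal{A}^{\ell+j}$ from $\mathcal{A}^\ell$. So
$$\mathcal{I}^{\ell+j}=\mathcal{I}^\ell\otimes_{\mathcal{A}^\ell}\mathcal{A}^{\ell+j}$$
as a field compositum. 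A base change of a Galois extension along a linearly disjoint extension stays Galois with the same group. It therefore suffices to show that $\mathcal{I}^\ell=\C(\Sigma^\ell)$ is Galois over $\mathcal{A}^\ell=\C(\E^\ell)^G$. Passing to the direct limit then gives the claim for $\mathcal{I}$ over $\mathcal{A}$, with the same finite degree $d_\ell$.

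\emph{Separability.} Theorem~\ref{Th1} already gives that the extension is finite. We work over $\C$, so in characteristic zero, and every finite extension is separable. The content of the proposition is therefore normality.

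\emph{Normality (main obstacle).} For a generic point $n\in N$, the fibre $\rho^{-1}(n)$ consists of the $d$ points where the orbit meets $\Sigma_o$, by \cite[Proposition 3.2]{HK1}. An $\mathcal{A}$-embedding of $\C(\Sigma)$ into an algebraic closure corresponds to choosing one of these $d$ points as the ``base'' intersection point. Normality means these $d$ points are interchanged by birational automorphisms of $\Sigma$ over $N$. I would first try to produce such automorphisms from $G$ itself. If $g\in G$ maps $\Sigma$ to itself, then $g|_\Sigma$ is a birational automorphism commuting with $\rho$. Conversely, if $y\in\Sigma$ and $g\cdot y\in\Sigma$ is another intersection point, one wants $g$ to be chosen rationally in $y$. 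This is exactly what fails for a general quasi-section, where $\C(\Sigma)/\mathcal{A}$ need not be normal.

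I would therefore use the freedom noted in the footnote of Section~\ref{S2}. Representing the quasi-section by $\sigma:Z\to M$ with $\rho\circ\sigma$ a Galois covering, one takes $Z$ to be the Galois closure, i.e.\ the normalization of $\Sigma$ in the normal closure of $\C(\Sigma)$ over $\C(N)$. The invariantization procedure then identifies $\mathcal{I}$ with $\C(Z)$. This is legitimate because the moving frame normalizes the group parameters only up to the finite ambiguity of the intersection points, and adjoining all of them is the same as recording the full moving frame, i.e.\ the finite set of all normalizations. With this identification $\mathcal{I}/\mathcal{A}$ is Galois by construction, and the Galois group $\Gamma$ permutes the $d$ branches.

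The delicate points are two. First, one must check that this choice is compatible across jet levels, $\pi_{k+1,k}(Z^{k+1})=Z^k$. This follows from the stabilization in the reduction step, since no new branching appears beyond order $\ell$. Second, one must check that it does not change the field $\mathcal{I}$ obtained by the classical method. That the two fields coincide is where I would expect to need the hypothesis over $\C$, because over $\R$ some of these conjugate branches are not real.
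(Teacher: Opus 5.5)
Your skeleton matches the paper's proof: reduce to jet level $\ell$ by stabilization, get separability from characteristic zero, and treat normality as the only real content. The first two steps are fine. The gap is in the normality step.

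\textbf{Why the normality step is circular.} By \eqref{IFS}, invariantization $r(z)\mapsto r(\xi)$ uses a single branch $\xi$ of the replacement invariants. So the field the moving frame produces is $\C(\Sigma)$, of degree $d$ (the number of intersection points) over $\mathcal{A}$. On the other hand, $[\C(Z):\mathcal{A}]=|\mathrm{Gal}(\C(Z)/\mathcal{A})|\geq d$, with equality if and only if $\C(\Sigma)/\mathcal{A}$ is already normal. Hence the identification $\mathcal{I}=\C(Z)$ is equivalent to the proposition itself. ``Galois by construction'' is circular, and your second ``delicate point'' is the whole statement. Adjoining all $d$ branches at once does give the splitting field $\C(Z)$, but a moving frame, even a multivalued one, does not supply that: it evaluates on one branch at a time.

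\textbf{Working over $\C$ does not rescue it.} Take $G=\{(x,y)\mapsto(x+b,\lambda y+c)\}$ acting on $J^\infty(\C,\C)$. On the fibre over the origin it acts by $y_k\mapsto\lambda y_k$, so $\mathcal{A}=\C(y_2/y_1,y_3/y_1,\dots)$. The quasi-section $\Sigma=\{y_2=y_1^4+y_1^2\}$ meets a generic orbit in three points. One gets $\iota(y_1)=\xi$ with $\xi^3+\xi=y_2/y_1$, and $\mathcal{I}=\mathcal{A}(\xi)$. The $y_k/y_1$ for $k\geq3$ are transcendental over $\C(\xi)$, so $\mathcal{I}/\mathcal{A}$ is normal if and only if $\C(\xi)/\C(\xi^3+\xi)$ is. The latter has degree $3$ but only the trivial automorphism: an automorphism must fix $\infty$, hence is affine, and $(a\xi+b)^3+(a\xi+b)=\xi^3+\xi$ forces $a=1$, $b=0$. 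So some hypothesis on $\Sigma$ is unavoidable. The paper's one-line justification (``there are no selection rule for the branches'') passes over exactly this issue, so you cannot close the gap by following it.

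\textbf{The repair.} Your own first idea works under the right hypothesis: take $\Sigma$ to be a $\Gamma$-section for a finite $\Gamma$, as in Proposition~\ref{KatPV} and Theorem~\ref{Th2}. All quasi-sections in the paper's examples are of this kind.
\begin{itemize}
\item Condition (iii) says that if $y$ and $g\cdot y$ both lie in $\Sigma_o$, then $g\in\Gamma$. So the element carrying one intersection point to another is chosen ``rationally in $y$'' trivially: it is constant.
\item $\Gamma$ acts on $\C(\Sigma^\ell)$ by automorphisms over $\mathcal{A}^\ell$.
\item Applied to $G^{\ell+r}_x$ acting on $\E^\ell_x$, \eqref{CMGG} gives $\C(\Sigma^\ell)^\Gamma=\mathcal{A}^\ell$.
\item Artin's theorem then makes $\C(\Sigma^\ell)/\mathcal{A}^\ell$ Galois, with group the image of $\Gamma$.
\item Your base-change argument carries this up to $\mathcal{I}/\mathcal{A}$.
\end{itemize}
Without such a hypothesis, what your argument actually proves is only that the normal closure of $\mathcal{I}$ is Galois over $\mathcal{A}$, which is a tautology.
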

 
 \begin{proof}
On a finite jet-level this follows from the fact that the algebraic extension $\mathcal{I}$ of $\mathcal{A}$
is normal: it adjoins all roots of irreducible polynomials over $\E$, responsible for extension, to $\mathcal{I}$
because there are no selection rule for the branches. 
In the language of moving frames \cite{FO2} this is given by algebraic invariantization \cite[Proposition 2.19]{HK2}:
the Galois group of the extension permutes the set of replacement invariants $\xi$ corresponding to a choice of quasi-section
$\sigma:(\xi_1,\dots,\xi_d)\mapsto(\xi_{\sigma(1)},\dots,\xi_{\sigma(d)})$. Here $\xi_i$ are local invariants
inducing coordinates $z_i$ on $\Sigma$ and invariantization $\C[z]_\Sigma\to\overline{\C(z)}^G$ takes the form
$r(z)\mapsto r(\xi)$.

The above references address the algebraic setup and hence apply to the action of $G_x^{k+r}$ on $\E_x^k$ for
$k\leq\ell$. As in the previous proof, from the jet-level $\ell$ the additional extensions of $\mathcal{I}$ 
over $\mathcal{A}$ are trivial. Thus the extension is normal. Since the ground field is of characteristic zero, it is separable.
Hence by the Artin theorem the extension is Galois, given by a group $\Gamma$ so that \eqref{AIG} holds.
 \end{proof}

We conclude that the Galois group $\Gamma$ is the extension group of $\mathcal{A}$ to $\mathcal{I}$,
and it may serve as the deck group of the corresponding regular cover over the quotient equation
$\Sigma\to\mathfrak{N}:=\E/G$, where both are considered diffieties (towers of fiber bundles). 
Over $\R$ the intermediate extension in \eqref{AJI} may fail to be Galois.

The deck group $\Gamma$ of Propisition \ref{Prop1} is not directly related to the quasi-section $\Sigma$, 
and it may not be a subgroup of $G$. 
Since quasi-sections are somewhat generic, one may not expect $\Gamma$ to be their stabilizer, in general.
There is, however, a more restrictive notion of section relative to a subgroup, introduced in \cite{Kat}:

 \begin{definition}
For algebraic subgroup $\Gamma\subset G$, an unmixed\footnote{That means, all componenets have the same dimensions.} 
subvariety $\Sigma\subset M$ is called a $\Gamma$-section if there exist a dense open subset $\Sigma_o\subset\Sigma$
such that (i) $\overline{G\cdot\Sigma}=M$, (ii) $\Gamma\cdot\Sigma=\Sigma$, 
(iii) $g\cdot\Sigma_o\cap\Sigma_o\neq\emptyset$
$\Rightarrow$ $g\in\Gamma$. 
 \end{definition}

Under the condition in the definition, the group $\Gamma$ is the stabilizer of $\Sigma$.
This implies right away the following expression for rational invariants
 \begin{equation}\label{CMGG}
\C(M)^G=\C(\Sigma)^\Gamma. 
 \end{equation}
There is the following criterion \cite{Kat,PV} for the existence of a $\Gamma$-section.

 \begin{prop}\label{KatPV}
Suppose $M_o\subset M$ is a nonempty (Zariski) open subset of $M$ such that $\forall x\in M_o$
the stabilizer $G_x\stackrel{conj}\simeq H\subset G$ for some fixed subgroup $H$. Then $\Sigma$, the union of 
maximal-dimension components of $M^H_o=\{x\in M_o:Hx=x\}$, is the $\Gamma$-section for $\Gamma=N(H)$.
 \end{prop}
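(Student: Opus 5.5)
We must verify the three conditions of the definition of a $\Gamma$-section for $\Sigma$ the union of top-dimensional components of $M_o^H$, with $\Gamma=N(H)$. Throughout we take $M_o$ to be $G$-invariant, replacing it by $G\cdot M_o$ if necessary. This is harmless: $G\cdot M_o$ is open, and stabilizers along an orbit are conjugate, so the conjugacy hypothesis still holds on it.

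\emph{Condition (ii).} If $n\in N(H)$ and $x\in M_o^H$, then $H\cdot(nx)=n(n^{-1}Hn)x=nHx=nx$. Hence $N(H)$ preserves $M_o^H$. Being a closed subgroup acting algebraically, it permutes the irreducible components of $M_o^H$ and preserves their dimensions. Therefore $\Gamma\cdot\Sigma=\Sigma$.

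\emph{Condition (i).} For $x\in M_o$ we have $G_x=gHg^{-1}$ for some $g$, so $g^{-1}x$ is fixed by $H$. Thus $G\cdot M_o^H=M_o$, which is dense. This gives density for all of $M_o^H$, but not yet for $\Sigma$ alone. Consider the morphism $G\times_{N(H)}M_o^H\to M_o$, $[g,x]\mapsto gx$.

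I claim its fibres are finite modulo the $N(H)$-action. Indeed, if $gx=g'x'$ with $x,x'\in M_o^H$, put $h=g^{-1}g'$, so $x=hx'$. Then $G_x\supset H$ and $G_x=hG_{x'}h^{-1}\supset hHh^{-1}$. Since $G_x$ is conjugate to $H$, and an algebraic group cannot properly contain a conjugate of itself (compare dimensions and numbers of components), we get $G_x=H$ and likewise $G_{x'}=H$. Hence $hHh^{-1}=H$, i.e.\ $h\in N(H)$.

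Consequently each component $C$ of $M_o^H$ contributes a subset $G\cdot C$ of dimension $\dim G-\dim N(H)+\dim C$. Since $M_o$ is irreducible (being open in irreducible $M$) and covered by finitely many such sets, one of them must be dense. The components of maximal dimension are exactly the ones achieving $\dim M$, which gives (i).

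\emph{Condition (iii).} This follows from the same argument. Take $\Sigma_o=\Sigma$, or shrink it to avoid intersections with lower-dimensional components. If $g\Sigma_o\cap\Sigma_o\ni y$, then $y=gx$ with $x,y\in M_o^H$. Then $G_y=gG_xg^{-1}$, and the argument above gives $G_x=G_y=H$, so $g\in N(H)=\Gamma$.

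\emph{Main obstacle.} The delicate point is that strict containment $gHg^{-1}\subsetneq H$ is impossible. For algebraic groups this is settled by dimension plus a count of connected components. One also needs the unmixedness bookkeeping in (i): that density holds for the maximal-dimension union rather than merely for all of $M_o^H$. For a general (non-algebraic) closed subgroup I would instead appeal to \cite{Kat,PV}.
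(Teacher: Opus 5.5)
Your argument is correct. Note that the paper gives no proof of Proposition~\ref{KatPV}; it only cites \cite{Kat,PV}. So you are supplying what the paper outsources, and your argument is essentially the standard one from those sources.

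The engine is the same as there. A closed subgroup cannot properly contain a conjugate of itself, so $G_x=H$ for every $x\in M_o^H$. This yields, all at once:
\begin{itemize}
\item $G\cdot M_o^H=M_o$;
\item injectivity of $G\times_{N(H)}M_o^H\to M_o$ (your ``finite modulo $N(H)$'' is in fact ``a single point'');
\item condition (iii) with $\Sigma_o=\Sigma\cap M_o$.
\end{itemize}
Your count $\dim G\cdot C=\dim G-\dim N(H)+\dim C$ is also right. It implicitly uses that the stabilizer of a component $C$ in $N(H)$ is closed of finite index, hence of full dimension. That is worth a sentence.

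One correction to your framing. Replacing $M_o$ by $G\cdot M_o$ is not harmless bookkeeping: it enlarges $M_o^H$ and therefore changes $\Sigma$. Rather, $G$-invariance of $M_o$ is an implicit hypothesis, as in Popov--Vinberg, and without it the statement fails.

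Counterexample: let $G=O_2(\C)$ act on $\C^2$ preserving $x^2+y^2$, and take $M_o=\{x^2+y^2\neq0,\ y\neq0\}$. Every stabilizer on $M_o$ is conjugate to $H=\{1,\op{diag}(1,-1)\}$, yet $M_o^H=\emptyset$, so $\Sigma=\emptyset$.

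What you have proved is therefore the intended version with $G$-stable $M_o$, which is what the paper actually uses. Say so explicitly instead of calling the replacement harmless.
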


For differential invariants with the stabilization of singularities on jet-level $\ell$ we conclude:
 \begin{prop}
Suppose for all $k\leq\ell$ the stabilizers $\op{Stab}(x_k)\subset G_x^{k+r}$ are conjugated for generic 
$x_k\in\pi_k^{-1}(x)\subset\E_x^k$, $x\in M$. Then the action of $G$ on $\E$ has $\Gamma$-section for $\Gamma=\Gamma^\ell$.
 \end{prop}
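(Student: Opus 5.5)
The plan is to reduce the statement to Proposition \ref{KatPV} applied on the finite jet-level $\ell$, and then to propagate the resulting section to all higher jets using the stabilization of singularities.

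\emph{Step 1: a $\Gamma$-section on each fiber $\E^\ell_x$.} Fix $x\in M$. Since $G$ is transitive on $M$, the invariants of the $G$-action on $\E^\infty$ coincide with those of the stabilizer $G_x^\infty$ on $\E^\infty_x$, as in the proof of Theorem \ref{Th1}. The hypothesis provides a Zariski open $M_o\subset\E^\ell_x$ on which all stabilizers $\op{Stab}(x_\ell)\subset G^{\ell+r}_x$ are conjugate to a fixed $H=H^\ell$. I then apply Proposition \ref{KatPV} to the algebraic action of $G^{\ell+r}_x$ on $\E^\ell_x$. This shows that the maximal-dimensional part $\Sigma^\ell_x$ of the fixed locus $(M_o)^H$ is a $\Gamma^\ell$-section, where $\Gamma^\ell=N(H^\ell)$; strictly, the group acting on it is the image of $N(H)$ modulo the part acting trivially. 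The hypothesis for $k<\ell$ is used so that the projections $\pi_{\ell,k}(\Sigma^\ell_x)$ are compatible $\Gamma^k$-sections with $\Gamma^\ell\to\Gamma^k$. Equivalently, it ensures that Zariski-generic points of $\E^\ell_x$ project to generic points at lower levels, so no lower-order singular stratum is forced.

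\emph{Step 2: lifting to $\E^\infty$.} By \cite[Theorem 1]{KL2}, as recalled in the proofs above, for $k\ge\ell$ the action of $G^{k+r}_x$ on $\pi^{-1}_{k,\ell}(\E^\ell_o)$ is free affine over $\E^\ell$. In particular, the kernel $K_k$ of $G^{k+r}_x\to G^{\ell+r}_x$ acts freely, with a geometric quotient that is a trivial affine-bundle structure. I would define
\[
\Sigma^k_x=\pi_{k,\ell}^{-1}(\Sigma^\ell_x)\cap S_k,
\]
where $S_k$ is an algebraic (affine) global section of the free $K_k$-action, normalized so that the residual action of $G^{\ell+r}_x$ preserves it. For a free affine (unipotent-type) action, such a section exists globally; there is no Galois obstruction because unipotent groups have trivial cohomology.

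It remains to check conditions (i)--(iii) of the definition of $\Gamma$-section. Condition (i) follows from density at level $\ell$ together with transitivity of $K_k$ on the fibers modulo $S_k$. Condition (ii) follows from the invariance of $S_k$ under a lift of $\Gamma^\ell$. Condition (iii) holds because if $g\Sigma^k_o\cap\Sigma^k_o\ne\emptyset$, then its image at level $\ell$ lies in $\Gamma^\ell$, and freeness forces the $K_k$-component to be trivial. Thus $\Gamma=\Gamma^\ell$ stabilizes for all $k\ge\ell$.

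\emph{Main obstacle.} The delicate point is Step 2: producing sections $S_k$ of the free higher-order action that are simultaneously compatible under $\pi_{k+1,k}$ and invariant under $\Gamma^\ell$. My first attempt would use the standard normalization of Olver--Pohjanpelto \cite{OP}, which solves the linear (affine) normalization equations order by order. Since $\Gamma^\ell$ is finite, or at least reductive, I would then average the affine normalization over $\Gamma^\ell$. An average of affine sections of a free affine action is again a section, and this yields the required invariance.
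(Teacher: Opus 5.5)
Your proposal is essentially the paper's proof. You apply Proposition~\ref{KatPV} on the fiber $\E^\ell_x$ and then extend $\Sigma^\ell_x$ through the free affine tower $k\ge\ell$ of \cite[Theorem 1]{KL2} by global sections. Your averaging of the affine normalizations over $\Gamma^\ell$ supplies the equivariance needed for conditions (ii)--(iii), which the paper's proof takes for granted. That averaging is fine for finite $\Gamma^\ell$, as the paper intends, although $N(H)$ in Proposition~\ref{KatPV} need not be finite or reductive in general.

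Drop the aside that the projections $\pi_{\ell,k}(\Sigma^\ell_x)$, $k<\ell$, are compatible $\Gamma^k$-sections. It is false in general: for complex plane projective curves, the $5$-jet projection of the fixed locus of $\op{Stab}(p_6)\cong\Z_3$ contains points with $y_2\neq1$ that are moved by $\op{Stab}(p_5)$. The paper's parallel claim that the $\Sigma^k$ are bundles over each other has the same defect, but nothing in your conclusion depends on levels below $\ell$.
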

 
 \begin{proof}
Fix $x\in M$ (by transitivity of $G$-action on the base, its choice is irrelevant). By the algebraic result of Proposition 
\ref{KatPV}, for each $k\leq\ell$, we can construct $\Gamma^k$-section $\Sigma^k\subset\E^k_x$.
These sections are bundles over each other: $\Sigma^{k+1}\to\Sigma^k$ and the groups are successive left
extensions $\Gamma^{k+1}\twoheadrightarrow\Gamma^k$.

By stabilization of singularities \cite{KL2}[Theorem 1], starting from a jet-level $\ell$, the action of $G^{k+r+1}_x$
is an affine extension of the action of $G^{k+r}_x$ on the affine fiber bundle 
 $$
\E^{k+1}_x\setminus\pi^{-1}_{k+1,\ell}(S_\ell)\to\E^k_x\setminus\pi^{-1}_{k,\ell}(S_\ell),\quad k\geq\ell,
 $$ 
where $S_\ell$ is the singularity locus (and these fibers allow geometric quotient).
Thus, for $k\geq\ell$ the action is free affine with differentual invariants affine by the highest jets, 
so the action in $\pi_{k+1,k}^{-1}(x_k)$, for $x_k$ outside singularities, admits a global section. 
Thus we can extend $\Sigma^\ell$ to $\Sigma=\Sigma^\infty$ through a sequence of affine bundles 
$\Sigma^{k+1}_o\to\Sigma^k_o$ with $\Sigma^k=\overline{\Sigma^k_o}$,
and $\Gamma=\lim\Gamma_k$ stabilizes; in fact $\Gamma=\Gamma^\ell$.
 \end{proof}
 
It follows that if the stabilization jet-level $\ell$ does not exceed the minimal jet-order of absolute differential invariants, then 
$G$-action admits a $\Gamma$-section over $\C$. Indeed, in this case there is a unique open orbit of $G$ action in $\E^\ell$.
Over $\R$ there may be several open orbits with different stabilizers $H$, so the stabilizer of the relative section
will be smaller than the complex deck group $\Gamma$. 

 \begin{proof}[Proof of Theorem \ref{Th2}]
The action of $G$ is transitive on the base $M$, so $G$-invariants in $\E$ correspond to invariants of the stabilizer $G_x$ 
in the fiber $\E_x$ for any $x\in M$. This latter action is algebraic and we can obtain a quasi-section $\Sigma_x\subset\E_x$.
If it is a $\Gamma$-section, then $\Gamma$ is precisely the stabilizer of $\Sigma$ in $G$. Differential invariants corresponds
to rational functions on $\Sigma$ invariant with respect to $\Gamma$. Thus, the field of rational functions $\C(\Sigma)$ 
is an extension of the field $\C(\E)^G$ by this group $\Gamma$, which fixes $\mathcal{A}$. 
 %
 \end{proof}

For the base field $\R$, denote by $\Gamma^k_{\!{\sf R}}$ the stabilizer ($G$-automorphism subgroup) 
of $\Sigma^k_x$ in $G^{k+r}_x$ (this group conjugacy class is independent of $x\in M$) and by 
$\Gamma_{\!{\sf R}}=\lim_{k\to\infty}\Gamma^k_{\!{\sf R}}$ 
its stable projective limit. This group may be a proper subgroup of the deck group $\Gamma$ 
because some of its components may not intersect the real slice,
and the field of global invariants $\mathcal{J}=\mathcal{I}^{\Gamma_{\!{\sf R}}}$ may not be rational. 
Yet the field of global rational invariants is a finite degree subfield 
$\mathcal{A}=\mathcal{J}^{\Gamma/\Gamma_{\!{\sf R}}}$ (still separating orbits)
and so $\mathcal{A}$ is again an algebraic extension of $\mathcal{I}$.


\section{Frenet-Serret formulae revisited}\label{S4}

Let $x(t)$ be a smooth curve in $\R^n$ that is nondegenerate, in the following sense: its first $n$ derivatives are 
linearly independent\footnote{This can be relaxed to the condition that $(n-1)$ derivatives are linearly independent
if we fix a volume form $\Omega$, by ommiting $x^{(n)}$ and using the formula 
$\Omega(e_1,\dots,e_{n-1},\cdot)=g(\cdot,e_n)$ instead.}. 
Equip $\R^n$ with the Euclidean metric $g_{ij}=\delta_{ij}$ (where the latter symbol is the Kronecker delta).
Then the Gram–Schmidt process applied to $x'(t),x''(t),\dots,x^{(n)}(t)$ yields an orthonormal basis,
called the Frenet–Serret frame $e_1(t),\dots,e_n(t)$. 
This process is split into orthogonalization
 $$
\bar{e}_i(t)= x^{(i)}(t)-\sum_{j=1}^{i-1}\frac{\langle r^{(i)}(t),\bar{e}_j(t)\rangle}{\langle \bar{e}_j(t),\bar{e}_j(t)\rangle}
 $$
and the following normalization $e_i(t)=\frac{\bar{e}_i(t)}{\|\bar{e}_i(t)\|}$. The former is clearly algebraic, while 
the latter is not, because $\|\bar{e}_i(t)\|=\sqrt{\langle\bar{e}_i(t),\bar{e}_i(t)\rangle}$ is multi-valued over $\C$
(and is not defined everywhere over $\R$ if the metric $g_{ij}$ is Minkowski or has another signature).

Denoting $E(t)=[e_1(t)|\dots|e_n(t)]$ the corresponding $n\times n$ matrix, the map $t\mapsto G=(E(t),x(t))\in O(n)\ltimes\R^n$
is a clasical example of the moving frame (usually presented through exterior differential systems \cite{C1}, but we will use jets). 
Here $\mathfrak{M}^k\subset J^k(\R,\R^n)$ is the space of $k$-jets of nondegenerate curves, 
and the correspondence $t\mapsto\bigl(x(t),x'(t),x''(t),\dots,x^{(n)}(t)\bigr)$ defines a map $\mathfrak{M}^n\to F(\R^n)$,
where the space of frames $F(\R^n)\subset J^1_0(\R^n,\R^n)$ is defined as the set of 
1-jets of local diffeomorphisms of $(\R^n,0)$.

Both embeddings above are Zariski open. For any Riemannian manifold $M$,
the Gram--Schmidt process is the projection $F(M)\to FO(M)$ to orthonormal frames.
The composition yields a $G$-equivariant map $\mathfrak{M}\to FO(\R^n)=G$; 
moreover this eliminates the dependence on parametrization of curves.

Alternatively, we can choose the cross-section $\Sigma_+$, defining the moving frame, by $x(0)=0$, 
$x'(0)=e_1$, $x''(0)\in\R_+\!\cdot e_2$, $x'''(0)\in\R_+\!\cdot e_3\!\!\mod\R^1(e_2)$, 
$x^{\text{\sl iv}}(0)\in\R_+\!\cdot e_4\!\!\mod\R^2(e_2,e_3)$, \dots,
$x^{(n)}(0)\in\R_+\!\cdot e_n\mod\R^{n-2}(e_2,\dots,e_{n-1})$, 
where $\{e_i\}_{i=1}^n$ is the standard orthonormal basis of $\R^n$. 
Here we consider un-parametrized curves (or rather adjust parametrization to satisfy the conditions). 
This in turn defines an equivariant moving frame, as above. 

The generalized curvatures for $x(t)$ are defined as
 $$
\kappa_i(t)= \frac{\langle e'_i(t),e_{i+1}(t)\rangle}{\|x'(t)\|}\qquad (1\leq i<n).
 $$
These are considered as a basis of differential invariants of un-parametrized curves in the Euclidean space:
 $$
\mathcal{I}=\langle\kappa_i,\kappa_{is},\kappa_{iss},\dots\rangle =\langle\kappa_i|D_s\rangle,
 $$
where $D_s=\tfrac{d}{ds}$ is the derivation by natural parameter and $\kappa_{is}=D_s(\kappa_i)$, etc.

However these functions are not $G$-invariant. Indeed, the orthogonal transformation $\sigma_i:e_j\mapsto(-1)^{\delta_{ij}}e_j$ results in 
 $$
\kappa_j\mapsto(-1)^{\delta_{1i}+\delta_{ij}+\delta_{i,j+1}}\kappa_j.
 $$  
Thus $\kappa_i^2$ are global invariants. The derivation $D_s$, 
corresponding to $e_1$, is also non-invariant, but $\nabla=\kappa_1\kappa_{1s}D_s$ is invariant and we get 
(the last portion of generators is an invariant derivation)
 $$
\mathcal{A}=\langle\kappa_1^2,\dots,\kappa_{n-1}^2,\kappa_{1s}^2,
\kappa_1\kappa_2\kappa_{1s}\kappa_{2s},\dots,\kappa_1\kappa_{n-1}\kappa_{1s}\kappa_{n-1,s},\dots\rangle =\langle\kappa_i^2|\nabla\rangle. 
 $$
The local transversal $\Sigma_+$ is semi-algebraic, and we can complete it to algebraic quasi-section $\Sigma$:
$x(0)=0$, $x'(0)=e_1$, $x^{(k)}(0)\in\op{span}(e_2,\dots,e_k)$, $1<k<n$.
Thus we conclude \framebox{$\Gamma=\Z_2^n$} and
 $$
[\mathcal{I}:\mathcal{A}]=2^n. 
 $$
 
Next, consider the (connected component) subgroup $G^\dagger=SO(n)\ltimes\R^n\subset G$. 
Then the moving frame is obtained by choosing the sign $\pm e_n$ to meet the orientation of $\R^n$; 
in the complex case the equivalent form is $\det[e_1|\dots|e_n]=1$.
The corresponding frame bundle $FSO(M)$ is a principal $SO(n)$ bundle over $M$.

This fixes one sign overall for the curvatures and hence we get \framebox{$\Gamma^\dagger=\Z_2^{n-1}$} generated by 
involutions $\sigma_{ij}=\sigma_i\sigma_j$, implying 
 $$
[\mathcal{I}:\mathcal{A}^\dagger]=2^{n-1}.
 $$
The generation of the fields of $G^\dagger$-invariants $\mathcal{A}^\dagger$ is more complicated and depends on dimension $n$:
for the first values of $n$ we have:
 \begin{gather*}
\mathcal{A}^\dagger_{n=2}=\langle\kappa_1^2|\kappa_1D_s\rangle,\\
\mathcal{A}^\dagger_{n=3}=\langle\kappa_1^2,\kappa_2|\kappa_1\kappa_{1s}D_s\rangle,\\
\mathcal{A}^\dagger_{n=4}=\langle\kappa_1^2,\kappa_2^2,\kappa_1\kappa_3|\kappa_1\kappa_{1s}D_s\rangle,\\
\mathcal{A}^\dagger_{n=5}=\langle\kappa_1^2,\kappa_2^2,\kappa_3^2,\kappa_4^2|\kappa_2\kappa_4D_s\rangle.
 \end{gather*}
The $\Gamma^\dagger$-invariant subalgebra $\mathcal{A}^\dagger_\kappa$ in $\R[\kappa_1,\dots,\kappa_{n-1}]$ 
generated by the curvatures (without derivations) exhibits the following 4-periodic behavior: 
for $n\equiv0,3\!\!\mod4$ it is generated by $\kappa_i^2$ ($1\leq i<n-1$) and also 
 \[
n=3: \kappa_2,\quad 
n=4: \kappa_1\kappa_3,\quad
n=7: \kappa_2\kappa_4\kappa_6,\quad
n=8: \kappa_1\kappa_3\kappa_5\kappa_7,\quad 
n=11: \kappa_2\kappa_4\kappa_6\kappa_8\kappa_{10},\quad\dots
 \]
For others, $n=2,5,6,9,10,\dots$ (that is, $n\equiv1,2\!\!\mod4$) it is generated by
$\kappa_i^2$ ($1\leq i<n$). We conclude:
 $$
[\mathcal{I}^\dagger_\kappa:\mathcal{A}^\dagger_\kappa]
=\left\{\begin{array}{ll}2^{n-2}, & n\equiv0,3\!\!\mod4\\ 2^{n-1}, & n\equiv1,2\!\!\mod4 \end{array}\right.
 $$

For $n\equiv1,2\!\!\mod4$ the invariant derivation has the form $\bar\nabla=h(\kappa_1,\dots,\kappa_{n-1})D_s$. 
For $n\equiv3,4\!\!\mod4$ the simplest the invariant derivation is $\nabla=\kappa_1\kappa_{1s}D_s$, but for
algebraic generation (without differentiation) of $\mathcal{A}^\dagger$ we have to add the generator $\kappa_{1s}^2$,
which explains universal form of $\Gamma^\dagger$.

\smallskip

{\it Complexification.} 
For real curves in flat metric spaces of different signature $\R^{p,q}$ or for complex curves in $\C^n$ 
with a $\C$-linear inner product, the local invariants $\kappa_i,\kappa_{is},\dots$ are applicable only in open 
(but not dense) sets $U$ (where the roots or their branches have sense and the length is nonzero) but the global invariants 
are given by the same formulae as for the Euclidean case, and $\Gamma$ is un-changed. 

\smallskip
 
{\it Example.}
To make the formulae more concrete in jet-notations, here are the curvature $\kappa=\kappa_1$ and the torsion $\tau=\kappa_2$
for the classical case of curves $\{y=y(x),z=z(x)\}$ in the Euclidean 3-space $\R^3(x,y,z)$:
 \begin{equation}\label{Eucl3D}
\kappa=\frac{\sqrt{(y_1z_2-z_1y_2)^2+y_2^2+z_2^2}}{(1+y_1^2+z_1^2)^{3/2}},\quad
\tau=\frac{y_2z_3-z_2y_3}{(y_1z_2-z_1y_2)^2+y_2^2+z_2^2}\ \text{ and }\ 
D_s =\frac1{\sqrt{1+y_1^2+z_1^2}}D_x.
 \end{equation}
The moving frame allows us to write the curve via local invariants (with $\kappa_s=D_s\kappa$, etc)
 $$
y= \kappa\,\frac{x^2}{2!} + \kappa_s\,\frac{x^3}{3!} + (\kappa_{ss} + 3\kappa^3 - \kappa\tau^2)\,\frac{x^4}{4!}+O(x^5),\quad
z= \kappa\tau\,\frac{x^3}{3!} + (\kappa\tau_s + 2\kappa_s\tau)\,\frac{x^4}{4!}+O(x^5). 
 $$

The field of rational differential invariants is generated by
 $$
\kappa^2,\quad \tau,\quad \nabla=\kappa\kappa_s D_s.
 $$
 
 \begin{remark}\label{rk1}
Curves in the Euclidean space can be parametrized by a natural parameter (defined up to shift).
Curves in other Klein geometries do not have such canonical parametrizations.
In conformal and projective geometry, discussed in the next two sections, they
possess a natural projective parameter, that is a preferred family of parametrizations
differing by M\"obius transformations. Such projective parameter exists for curves
in various homogeneous geometries \cite{D} with a canonical moving frame along them \cite{DZ}.
 \end{remark}

\section{Un-parametrized curves in the conformal sphere}\label{S5}

Recall that the conformal sphere $\mathcal{Q}^n$ is the projectivization (sometimes ray projectivization)
of the null cone in the space $\R^{p+1,q+1}$.
For definiteness we restrict to the Euclidean signature, when the tractor space is Lorentzian $\R^{n+1,1}$, and the null
cone is given by $\sum_1^nx_i^2-2x_0x_{n+1}=0$.

Here the situation is similar to the previous: local differential invariants contain radicals, see \cite{MB,MMR} and references therein. 
The conformal version of Frenet-Serret formulae is (see \cite{MMR}; here the dot means the derivative wrt conformal parameter $s$):
 \begin{gather*}
\dot{e}_0=  e_1,\quad \dot{e}_1=\mu_1 e_0+e_{n+1},\quad \dot{e}_2=e_0+\mu_2 e_3,\\
\dot{e}_k=-\mu_{k-1}e_{k-1}+\mu_ke_{k+1}\ (3\leq k<n),\quad \dot{e}_n=-\mu_{n-1}e_{n-1},\quad 
\dot{e}_{n+1}=\mu_1 e_1+e_2. 
 \end{gather*}
The frame is determined up to rescaling. Indeed, consider the grading 
$\g=\mathfrak{so}(n+1,1)=\g_{-1}\oplus\g_0\oplus\g_1$ corresponding to matrix representation
in block form with blocks of sizes $(1,n,1)$:
 $$
\g=\begin{pmatrix} \g_0 & \g_1 & 0\\ \g_{-1} & \g_0 & \g_1\\ 0 & \g_{-1} & \g_0 \end{pmatrix}.
 $$
The stabilizer of a point in $\mathcal{Q}^n=G/P$ is the parabolic subgroup $P=G_0\cdot\exp\g_1$,
where $G_0=CO(n)$ and $\g_1=\R^n$ have general elements ($A\in O(n)$, $\lambda\in\R_\times$, $w\in\R^n$)
 $$
\delta_{\lambda,A}=\op{diag}(\lambda,A,\lambda^{-1})\in G_0,\quad 
\delta_w=\exp\begin{pmatrix}0 & -w^t & 0\\ 0 & 0 & w\\ 0 & 0 & 0 \end{pmatrix}\in\exp\g_1.
 $$
The element $\delta_{\lambda,A}\delta_w$ preserves the frame if $w=0$,
$A=\op{diag}(a_1,\dots,a_n)$, $a_i=\pm1$, and in addition $\lambda^2=a_1a_2$.
Thus $\lambda^4=1$, so in the real case $\lambda=\pm1$, and
consequently \framebox{$\Gamma_{\!{\sf R}}=\Z_2\times\Z_2^{n-1}=\Z_2^n$} with group parameters $(\lambda,a_2,\dots,a_n)$ since
$a_1=a_2$. It acts on local invariants as follows: $\mu_k\mapsto\tilde\mu_k$, where
 \begin{equation}\label{tildemu}
\tilde\mu_1=\lambda^2\mu_1=\mu_1,\ \tilde\mu_2=\lambda^{-1}a_1a_2a_3\mu_2=\lambda a_3\mu_2,\ 
\tilde\mu_k=\lambda^{-1}a_1a_ka_{k+1}\mu_k=\lambda a_2a_ka_{k+1}\mu_k\ (2<k<n).
 \end{equation}
In addition, since $\tilde{\p}_s=\lambda^{-1}a_1\p_s=\lambda a_2\p_s$ with invariant derivations 
$\bar\nabla=\mu_{1s}\p_s$, so we have
 \begin{equation}\label{IJ}
\mathcal{I}=\langle\mu_1,\dots,\mu_{n-1}|\p_s\rangle,\qquad
\mathcal{J}=\langle\mu_1,\mu_2^2,\dots,\mu_{n-1}^2|\bar\nabla\rangle
 \end{equation}
(where $\mathcal{J}$ is the algebra of global real invariants) and we conclude
 $$
[\mathcal{I}:\mathcal{J}]=2^n. 
 $$

{\it Complexification.} 
In the complex version of the same problem, the equation $\lambda^4=1$ has four different roots in $\C$, 
so $\lambda\in\Z_4$ (multiplicative group of fourth roots of unity), and consequently 
\framebox{$\Gamma=\Z_4\times\Z_2^{n-1}$} with group parameters $(\lambda,a_2,\dots,a_n)$ since
$a_1=\lambda^2a_2$. It acts on local invariants as before \eqref{tildemu} but now 
$\tilde\mu_1=\lambda^2\mu_1=\pm\mu_1$ and the invariant derivation is
$\nabla=\mu_1\mu_{1s}\p_s$, so we have
 \begin{equation}\label{IA}
\mathcal{I}=\langle\mu_1,\dots,\mu_{n-1}|\p_s\rangle,\qquad
\mathcal{A}=\langle\mu_1^2,\mu_1\mu_2^2,\dots,\mu_1\mu_{n-1}^2|\nabla\rangle,
 \end{equation}
and we conclude
 $$
[\mathcal{I}:\mathcal{A}]=2^{n+1}. 
 $$  
 
 \begin{remark}\label{rk2}
Note that $\mathcal{J}$ from \eqref{IJ} in the real case is the algebra of global differential invariants, but they contain roots 
(in both generators $\mu_1$ and $\bar\nabla$). The algebra $\mathcal{A}$ of real rational differential invariants is given 
by the same formula \eqref{IA} as in the complex case. Both real algebras $\mathcal{J}$ and $\mathcal{A}$ separate 
generic $G$-orbits: the former by construction, and the latter by the real Rosenlicht's theorem \cite{R} 
(and its Lie-Tresse version \cite{KL2}). Indeed, the main difference is in the generator $\mu_1=T^2$ in the notation
\eqref{def_conf3D_inv_der} below. 
One notes that it is nonnegative, so taking another square $\mu_1^2=T^4$ in \eqref{TQD} is invertible over $\R$. 
 \end{remark}

To make the situation more transparent, let us again give concrete formulae for the simplest case $n=3$.
The generators of $\mathcal{I}$ are the following two differential invariants and an invariant derivation,
expressed in terms of Euclidean curvature $\kappa$, torsion $\tau$ \eqref{Eucl3D}, and natural parameter $ds$:
 \begin{equation}\label{def_conf3D_inv_der}
\begin{split}
     Q & =\frac{4\nu\nu_{ss}-4\kappa^2\nu^2-5\nu_s^2}{8\nu^3},\qquad 
T=\frac{2\kappa_s^2\tau+\kappa^2\tau^3+\kappa\kappa_s\tau_s-\kappa\kappa_{ss}\tau}{\nu^{5/2}},\\
\nu & =\sqrt{\kappa^2\tau^2+\kappa_s^2},\qquad \omega=\sqrt{\nu}ds,\quad D_\omega=\nu^{-1/2}D_s.
\end{split}
 \end{equation}

The first absolute invariant comes in order 4 and is called the conformal torsion $T$. In order 5 we get two more invariants:
conformal curvature $Q$ and the derivative of torsion $T_\omega=D_\omega T$. Order 6 yields 
two more invariants $Q_\omega$, $T_{\omega\omega}$, etc. The relative invariant $\nu$ is 
called the \textit{conformal arclength}. 

The quasi-section $\Sigma$ defining the moving frame is 
$x(0)=0$, $x'(0)=e_1$, $x''(0)=0$, $x'''(0)=e_2$, $x^{iv}(0)\propto e_3$.
We can again write a normal form of the curve in conformal $\R^3(x,y,z)$ 
with coordinate $x$ as a parameter:
 $$
y=\frac{x^3}{3!}+(2Q-T^2)\,\frac{x^5}{5!}+(2Q_\omega-3TT_\omega)\,\frac{x^6}{6!}+O(x^7),\ \ 
z=T\,\frac{x^4}{4!}+T_\omega\,\frac{x^5}{5!}+(T_{\omega\omega}-T^3+7TQ)\,\frac{x^6}{6!}+O(x^7).
 $$

The residual (deck) group $\Gamma$ is given by the transformation
$(x,y,z)\mapsto\Bigl(\sigma x,\sigma^{-1}y,\epsilon\sigma z\Bigr)$, where $\epsilon^2=1$, $\sigma^4=1$. 
With respect to this discrete freedom local invariants \eqref{def_conf3D_inv_der} change as follows:
 $$
T\mapsto\epsilon\sigma T,\qquad Q\mapsto\sigma^2Q,\qquad D_\omega\mapsto\sigma^{-1} D_\omega,
 $$
Note also that the Euclidean curvature and the natural parameter change $\kappa\mapsto\sigma^2\kappa$,
$\tfrac{d}{ds}\mapsto\sigma^2\tfrac{d}{ds}$, while $\tau$ is invariant. 
Thus the expressions containing even number of $\kappa$ and $s$-derivatives are invariant, and so the algebra $\mathcal{A}$
of global rational conformal invariants has the following generators:
 \begin{equation}\label{TQD}
T^4,\qquad QT^2,\qquad \nabla=TT_\omega D_\omega. 
 \end{equation}
The first invariant is $T^4$ of order 4, then come $Q^2$, $QT^2$ and $T_\omega^2$ of order 5, but we note that 
$Q^2=(QT^2)^2/T^4$ is derived. In order 6 we get $T^{-1}T_\omega Q_\omega$ and $TT_{\omega\omega}$, etc. 
The lowest order global invariant form is $T^{-1}T_\omega^{-1}\omega$.
All those are obtained as fixed point of the deck group \framebox{$\Gamma=\Z_4\times\Z_2$}.

\smallskip

It is instructive to see the action of the stabilizer $H$ of the point 
$p_2=\{x=0,y(0)=0,y'(0)=0,y''(0)=0,z(0)=0,z'(0)=0,z''(0)=0\}$ on
$\pi_{5,2}^{-1}(p_2)\subset J^5(\R_x^1,\R_{y,z}^2)\subset J^5(\mathcal{Q}^3,1)$ in jet-coordinates $(y_3,z_3,y_4,z_4,y_5,z_5)$. 
The subgroup is parametrized as follows
 $$
H=\left\{\begin{pmatrix}\lambda & 2\lambda b & 0  & -2\lambda b^2\\
0 & a & 0 & -2ab\\ 0 & 0 & A & 0\\ 0 & 0 & 0 & \lambda^{-1}\end{pmatrix}\,:\,
A=\begin{pmatrix}\alpha & \beta\\ \gamma & \delta\end{pmatrix}\in O(2),\, a=\det A,\, \lambda\in\R_\times,\, b\in\R\right\}
\subset SO(4,1)
 $$
and its action in local chart given by
 $$
(x,y,z)\mapsto
\left(\frac{a(x+b\ell^2)}{\lambda(1+2bx+b^2\ell^2)}, \frac{\alpha y+\beta z}{\lambda(1+2bx+b^2\ell^2)},
\frac{\gamma y+\delta z}{\lambda(1+2bx+b^2\ell^2)}\right),\ \text{ where }
\ell^2=x^2+y^2+z^2,
 $$
has algebraic prolongation to $\pi_{5,2}^{-1}(p_2)$.

Elimination of parameters leads to invariants $T^2$ and $Q$, which are global invariants given by 
 \begin{gather*}
T=\frac{y_3z_4- y_4z_3}{(y_3^2+z_3^2)^{5/4}},\quad
Q= \frac{(y_3^2+z_3^2)(y_3y_5+z_3z_5) + (z_3^2-\tfrac54y_3^2)y_4^2 - \tfrac92y_3z_3y_4z_4 + (y_3^2-\tfrac54z_3^2)z_4^2}
{2(y_3^2+z_3^2)^{5/2}}.
 \end{gather*}

\section{Un-parametrized curves in the projective space}\label{S6}

The last example of finite-dimensional Lie group action we consider is the projective group $G=PGL(n+1)$ acting on 
$\mathbb{P}^n$ (this can be considered over $\R$ or over $\C$) and we induce the action on unparametrized curves.

We start with curves in the projective plane ($n=2$) given by $y=y(x)$, in which case $x,y_k$ ($k\geq0$) will be coordinates
in the chart $J^\infty(\R,\R)\subset J^\infty(\R P^2,1)$.
Differential invariants in this case were computed by Halphen \cite{Ha}, see also \cite[Table 5]{O} and \cite{KL1,KoL}. 
The field $\mathcal{I}$ of (local) differential invariants is generated by
 \begin{equation}\label{cubic}
I =\frac{R_7}{R_5^{8/3}},\quad \nabla=\frac{R_2}{\sqrt[3]{R_5}} D_x,
 \end{equation}
where $R_2, R_5, R_7$ are relative invariants of orders 2, 5, 7 
(see \cite{KS0} for discussion of their weights) given by 
\begin{align*}
  R_2 &= y_2,  \\
  R_5 &= 9y_2^2y_5-45y_2 y_3 y_4+40 y_3^3, \\
  R_7 &= 18 y_2^4 (9 y_2^2 y_5 - 45 y_2 y_3 y_4 + 40 y_3^3) y_7 - 189 y_2^6 y_6^2
   + 126 y_2^4 (9 y_2 y_3 y_5 + 15 y_2 y_4^2 - 25 y_3^2 y_4) y_6\\ & -  189 y_2^4 (15 y_2 y_4
   + 4 y_3^2) y_5^2 + 210 y_2^2 y_3 (63 y_2^2 y_4^2 - 60 y_2 y_3^2 y_4+32 y_3^4) y_5 -  4725 y_2^4 y_4^4 \\
  & - 7875 y_2^3 y_3^2 y_4^3
   + 31500 y_2^2 y_3^4 y_4^2 - 33600 y_2 y_3^6 y_4 + 11200 y_3^8 .
\end{align*}
These invariants can be obtained by the method of moving frame as follows. 
The action is transitive on $J^1$, so we can normalize the point $p_1=\{x=0,y_0=0,y_1=0\}$. 
The stabilizer of $p_1$ is the Borel subgroup $B\subset G$ acting as follows:
 $$
(x,y)\mapsto \Bigl(\frac{ax+by}{1+\alpha x+\beta y} ,\frac{cy}{1+\alpha x+\beta y}\Bigr).
 $$

The action in $\pi_{2,1}^{-1}(p_1)$ is transitive in the complement to $R_2\neq0$. 
Thus we can normalize $p_2=\pi_{2,1}^{-1}(p_1)\cap\{y_2=1\}$, and the stabilizer $\op{St}(p_2)$ is
given by $c=a^2$. 

Next, we normalize $p_3=\pi_{3,2}^{-1}(p_2)\cap\{y_3=0\}$; the stabilizer $\op{St}(p_3)$ is
given by $b=\alpha a$. Afterwards, we normalize $p_4=\pi_{4,3}^{-1}(p_3)\cap\{y_4=0\}$, the stabilizer $\op{St}(p_4)$ is
given by $\beta=\tfrac12\alpha^2$. 

The action in $\pi_{5,4}^{-1}(p_4)$ is transitive in the complement to $R_5\equiv 9y_5\neq0$. 
Thus we can normalize $p_5=\pi_{5,4}^{-1}(p_4)\cap\{y_5=1\}$, and the stabilizer $\op{St}(p_5)$ is
given by $a=\sqrt[3]{1}$. Note that the stabilization condition $\bigl(a^{-3}y_5\bigr)|_{y_5=1}=1$ yields
three components of the stabilizer, which contribute to the deck group $\Gamma$.

Finally, the action is transitive in $\pi_{6,5}^{-1}(p_5)$ and normalizing $p_6=\pi_{6,5}^{-1}(p_5)\cap\{y_6=0\}$
uniquely fixes the last group parameter $\alpha$, which reduces the stabilizer to $\{e\}$. 

Thus we get $\Sigma=\pi_{\infty,6}^{-1}(p_6)$ and \framebox{$\Gamma=\Z_3$}. 
This explains cubic roots in \eqref{cubic}. The algebra $\mathcal{A}$ of global 
differential invariants is generated by
 \begin{equation}\label{cubic+}
I' =I^3=\frac{R_7^3}{R_5^8},\quad \nabla'=I\nabla=\frac{R_2R_7}{R_5^3} D_x
 \end{equation}
and we have
 $$
[\mathcal{I}:\mathcal{A}]=3. 
 $$
 
 \begin{remark}\label{rk3}
The deck group $\Gamma=\Z_3$ is Galois, when we work over $\C$. 
Over $\R$ the extension is not Galois, as the cubic equation has only one real root, and the normal extension
has to adjoin the cubic root of unity.
 \end{remark}

It turns out that the same extension group applies to higher-dimensional version. The route slightly changes for $n>2$.
Namely, the moving frame normalization proceeds as follows. Let $(y^1=x,y^2,\dots,y^n)$ be 
the standard projective coordinates in a chart in $\mathbb{P}^n$,
in which (generic) curves take the form $y^j=y^j(x)$. The jet-coordinates will be $x,y^j_k$ ($1<j$, $0\leq k$). 
The action of $[a_{ij}]\in G$ is given by
 $$
(y^1,y^2,\dots,y^n)\mapsto
\left(\frac{a_{10}+a_{11}y^1+a_{12}y^2+\dots+a_{1n}y^n}{a_{00}+a_{01}y^1+a_{02}y^2+\dots+a_{0n}y^n},\dots,
\frac{a_{n0}+a_{n1}y^1+a_{n2}y^2+\dots+a_{nn}y^n}{a_{00}+a_{01}y^1+a_{02}y^2+\dots+a_{0n}y^n}\right).
 $$
The prolongation of this action is transitive on $J^1$ and we can normalize 1-jet
to $p_1=\{x=0,y^j_0=0,y^j_1=0:1<j\leq n\}$. The stabilizer of this point is
the parabolic subgroup $P_{12}$ given by the upper $1+1+(n-1)$ block-triangular subgroup 
(or rather its isomorphic image in $PGL$):
 $$
\begin{pmatrix}1 & * & *\\ 0 & * & *\\ 0 & 0 & *\end{pmatrix}\subset GL(n+1)
 $$

The prolonged action on $J^n$ is transitive on $n$-jets of nondegenerate curves, which we can normalize  
to $p_n=\pi_{n,1}^{-1}(p_1)\cap\{y^j_k=\delta^j_k:1<j,k\leq n\}$.
The stabilizer of this point is the strictly upper block-triangular subgroup with $d^a_{n-1}$ a (nonstrictly)
upper-triangular matrix with the diagonal part\footnote{Warning: $i$ is the degree of $a$ in $a^i$, while $j$ in $y^j$ is 
the coordinate index.} $\op{diag}(a^2,\dots,a^n)$ and overdiagonal entries uniquely determined
by $a,\alpha$ and other $*$ in the big matrix:
 \begin{equation}\label{prjgen}
\left\{\begin{pmatrix}1 & \alpha & *\\ 0 & a & *\\ 0 & 0 & d^a_{n-1}\end{pmatrix}:a\in\R_\times,
\alpha\in\R\right\}\subset GL(n+1)
 \end{equation}

The prolonged action on $J^{n+2}$ is transitive above $p_n$, so we can normalize $(n+2)$-jet
to the point $p_{n+2}=\pi_{n+2,n}^{-1}(p_n)\cap\{y^j_k=0:k=n+1,n+2\}$.
Stabilization of $p_{n+2}$ fixes uniquely all group parameters, except for $a,\alpha$; thus, 
the stabilizer is given by \eqref{prjgen} with all un-specified entries being polynomials in $a,\alpha$. 

The remaining 2-dimensional solvable Lie group is generated by the nilpotent transformation $\exp(tv)$, $t\in\R$,
for $v=x\sum_1^ny^i\p_{y^i}-\sum_1^{n-1}q_iy^{i+1}\p_{y^i}$
(for some positive rationals $q_i$ the precise values of which are not essential) and
the scalings $(y^1,y^2,\dots,y^n)\mapsto(ay^1,a^2y^2,\dots,a^ny^n)$, $a\in\R_\times$.

Its action on $\pi_{n+3,n+2}^{-1}(p_{n+2})\simeq\R^{n-1}(y^j_{n+3}:1<j\leq n)$ 
is generated by the nilpotent $\exp(tv_{n+2})$, for $v_{n+3}=-\sum_{i=2}^{n-1}q_iy_{n+3}^{i+1}\p_{y_{n+3}^i}$
(with the same $q_i$) and the scaling $y^j_{n+3}\mapsto a^{j-n-3}y^j_{n+3}$ ($1<j\leq n$).
It follows that this action has $(n-3)$ absolute rational invariants and 1 relative invariant $y_{n+3}^n$ of weight $-3$.

A generic point in $\pi_{n+3,n+2}^{-1}(p_{n+2})$ can be normalized to the submanifold $\Sigma$ given by equations 
of $p_{n+2}$ and $\{y^{n-1}_{n+3}=0,y^n_{n+3}=1\}$. 
The stabilizer of this $\Sigma$ is given by the deck group \framebox{$\Gamma=\Z_3$} 
over $\C$ and by \framebox{$\Gamma_{\!{\sf R}}=1$} over $\R$. Therefore $\mathcal{J}=\mathcal{A}$ over $\C$, while
$\mathcal{J}=\mathcal{I}$ over $\R$, but in any case
 $$
[\mathcal{I}:\mathcal{A}]=3. 
 $$
The only difference of the case $n>2$ with the previous case is that for $n=2$ the determination of parameters 
$a,\alpha$ happens on two last jet-steps, while for $n>2$ it happens on the very last jet-level.

\section{An infinite-dimensional case: Lie problem for ODEs}\label{S7}

Consider the class of ordinary differential equations of the form 
 \begin{equation}\label{LiEq}
y''=z(x,y).
 \end{equation} 
This class of ODEs includes all Painlev\'e transcendants \cite{BB} and it was first investigated by S.\,Lie \cite{Lie2}. 
He noted that the (infinite-dimensional) pseudogroup $G$ of point transformations preserving this class is 
 \begin{equation} \label{Trans1}
x\mapsto X=a(x),\quad y\mapsto Y=c\sqrt{a'(x)}y+b(x)
 \end{equation}
for some functions $a(x),b(x)$ with $a'(x)\neq0$ and constant $c$, inducing the following transformation \cite{BB,B}
 \begin{equation} \label{Trans2}
z\mapsto Z=\frac{c}{(a')^{3/2}}z
+\frac{c}{4(a')^{7/2}}\Bigl(2a'a'''-3(a'')^2\Bigr)y
+\frac{a'b''-a''b'}{(a')^3}.
 \end{equation}

Note that $J^0=\R^3(x,y,z)$ can be considered as the space of dependent and independent variables for the problem
(in which equations \eqref{LiEq} are surfaces) and the pseudogroup $G$ lifts to $J^\infty$. 
The algebra of invariants for this action was shown in \cite{B} to have the following generators: the differential invariants
 $$
I_1 = \frac{z_{02}z_{04}}{z_{03}^2},\quad
I_2 = \frac{z_{03}}{z_{02}^3}\left((z_{22}+5z_{01}z_{02}+z_{00}z_{03}) + 
\frac{12z_{03}z_{12}z_{13} -5z_{02}z_{13}^2- 6z_{12}^2z_{04}}{5z_{02}z_{04} - 6z_{03}^2}\right),
 $$
where $z_{kl}=\p_x^k\p_y^lz$, and the invariant derivations
 $$
\nabla_1=\frac{z_{02}}{z_{03}}D_y,\quad
\nabla_2=\frac{\sqrt{z_{03}}}{z_{02}}\left(D_x-\frac{5z_{02}z_{13}-6z_{03}z_{12}}{5z_{02}z_{04}-6z_{03}^2}D_y\right).
 $$
In \cite{KS} the invariants were chosen global/rational: instead of $I_2$ there was considered
 $$
I_2' = \frac{(z_{00}z_{03} + 5z_{01}z_{02} + z_{22})(5z_{02}z_{04} - 6z_{03}^2)}{u_{02}^3z_{03}} 
- \frac{5z_{13}^2}{z_{02}^2z_{03}} - \frac{6z_{12}^2z_{0,4}}{z_{02}^3z_{03}} + \frac{12z_{12}z_{13}}{z_{02}^3},
 $$
and instead of $\nabla_2$ there was considered 
 $$
\nabla_2'= z_{03}\Delta(I_1)\Delta,\quad\text{ where }\quad
\Delta=\frac{5z_{02}z_{04} - 6z_{03}^2}{z_{02}z_{03}^2}D_x - \frac{5z_{02}z_{13} - 6z_{03}z_{12}}{z_{02}z_{03}^2}D_y.
 $$
While the first change is inessential due to syzygy $I_2'=I_2(5I_1-6)$, the second resolves the nonrationality
(radical) in the derivation $\nabla_2$. The relation between the two is noninvertible:
 $$
\nabla_2'= (5I_1-6)^2\nabla_2(I_1)\nabla_2. 
 $$

 \begin{remark}\label{rk4}
At this point let us explain why the global Lie-Tresse theorem \cite{KL2} is applicable.
Its essential assumption is algebraicity of the action of $G$, meaning that for every point $w\in J^0$ and every natural $k$
the action of the stabilizer $G^k_w$ prolonged to $k$-jets is algebraic on $\pi_{k,0}^{-1}(w)\subset J^k$.
This may seem to fail as formulae \eqref{Trans1}-\eqref{Trans2} contain radicals. Yet the action is algebraic when expressed
in the parameter $a_1^{1/2}$ and other jet-coordinates $a_i$, $i>1$ and $b_j$, $j>0$.
Thus by the main result of \cite{KL2} the orbits of $G$ can be separated by rational differential invariants
and the algebra is finitely generated in the differential sense.
 \end{remark}

Now let us describe the moving frame. The action is transitive on the base and even on $J^1$, so we can normalize the point
to $p_1=\{x=0,y=0,z_{00}=0,z_{10}=0,z_{01}=0\}$, which fixes group parameters $a_0,b_0,b_2,a_3,b_3$
(where $a_k=\p_x^ka$ and similarly for $b$).

The action is almost transitive on $\pi_{2,1}^{-1}(p_1)$ with relative invariant $z_{02}$. 
The point $p_2=p_1\cap\{z_{20}=0,z_{11}=0,z_{02}=1\}$ belongs to an open orbit,
and its stabilizer corresponds to fixed parameters $a_4,b_4,c$.

Next, the action is almost transitive on $\pi_{3,2}^{-1}(p_2)$ with relative invariant $z_{03}$. 
The point $p_3=p_2\cap\{z_{30}=0,z_{21}=0,z_{12}=0,z_{03}=1\}$ belongs to an open orbit,
and its stabilizer corresponds to fixed parameters $a_1,a_2,a_5,b_5$, however the pullback of $z_{03}$
is $a_1^2z_{03}$, thus the stabilizer of $p_3$ has two components that contribute to $\Gamma$
(this is the source of the square root in $\nabla_2$).

Furthermore, the action has orbits of codimension 2 in $\pi_{4,3}^{-1}(p_3)$ with absolute invariants 
$z_{22}-\frac{z_{13}^2}{z_{04}-6/5}$ and $z_{04}$. 
The points $p_4\in \pi_{4,3}^{-1}(p_3)\cap\{z_{40}=0,z_{31}=0,z_{13}=0\}$ belong to an open orbit,
and the stabilizer of each such point corresponds to fixed parameters $b_1,a_6,b_6$. 

After this step the situation stabilizes: for $k>4$ we get $k-1$ pure order $k$ invariants, linear in $k$-jets, 
with normalization governed by group parameters $a_{k+2},b_{k+2}$. Thus \framebox{$\Gamma=\Z_2$} and
 $$
[\mathcal{I}:\mathcal{A}]=2.
 $$
This explains the square root in local invariants of \cite{B} and the emergence of global invariants of \cite{KS}.

\section{Outlook and open problems}\label{S8}

Let us revisit Proposition \ref{KatPV} in the case of adjoint action of a semisimple complex Lie group $G\subset\op{Aut}(\g)$.
The generic stabilizer $H$ is the maximal torus, $\Sigma=\g^H=\mathfrak{h}$ is the Cartan subalgebra,
$\Gamma=N(H)=W$ is the Weyl group, and we get for rational invariants:
 \begin{equation}\label{ghW1}
\C(\g)^G=\C(\h)^W.
 \end{equation}
In this paper we generalized this for the field differential invariants as \eqref{AIG}.

A stronger result than \eqref{ghW1} is the Chevalley restriction theorem \cite{Ch2} for the ring of polynomial $\g$-invariants 
of an algebraic semisimple complex Lie algebra $\g$, aka Casimirs:
 \begin{equation}\label{ghW2}
Z(\g)=U(\g)^\g=U(\h)^W.
 \end{equation}
By the Chevalley–Shephard–Todd theorem, the latter algebra $Z(\g)$ is polynomial, so
alternatively, this formula can be written as 
 $$
\C[\g]^\g=\C[\h]^W.
 $$ 

According to \cite{KL2}, one may change $\mathcal{A}$ to the algebra $\mathfrak{A}^\ell$ consisting 
of invariants that are rational by jets of order $<\ell$ and polynomial by jets of order $\ge\ell$, 
without the finite generation and orbit separation properties.
We can therefore obtain an analog of \eqref{AIG} for rational-polynomial differential invariants.

\smallskip

The polynomiality is restored if we pass to scalar relative differential invariants. Their algebra 
$\mathcal{R}=\oplus_{w\in\mathcal{W}}\mathcal{R}_w$ is graded by a weight lattice \cite{KS}.
Almost every orbit intersects the quasi-section $\Sigma$, and we can achieve that singular hypersurfaces given by
relative invariants also intersect it; thus we can read off lowest-codimension singularities of the orbit foliation
and obtain that $\mathcal{R}$ can be identified with relative invariants of the ring of polynomials on $\Sigma$
(polynomiality is understood only in jet-coordinates, see \cite{KS0}) with respect to the deck group $\Gamma$ action.
This should be easier to compute, in particular the weight lattice $\mathcal{W}$ is expected to relate to 
characters of $\Gamma$. It can simplify computations of differential invariants, 
since every rational differential invariant is a ratio of two scalar relative invariants of the same weight \cite{KS}, 
and thus would extend algorithms of \cite{COP} to global differential invariants.

Higher-codimension singularities may not meet quasi-section $\Sigma$ obtained with the construction of Section \ref{S3}
because it is based on the union of top-dimension components of a fixed point set. Possibly allowing mixed subvarieties 
for relative sections resolves this problem, which should be also explored elsewhere.

Finally we mention that the construction of the $\Gamma$-section realizes the rational quotient $\E/G$.
It is independent of $x$ and $\Sigma$. Indeed, the former follows from the transitivity of the action of $G$
on the base of $\E$. The latter follows from the property of the quasi-section that almost every orbit intersects both 
$\Sigma$ and $\Sigma'$. Thus, if $\Sigma'$ is another quasi-section, then mapping an intersection point of an orbit 
with $\Sigma$ to an intersection point with $\Sigma'$ gives a birational isomorphism between $\Sigma/\Gamma$ and 
$\Sigma'/\Gamma'$. One should elaborate this into a constructive algorithm, based on algebraic approaches of \cite{HK1,Kem}.

\medskip

{\bf Acknowledgment.} 
The research was supported by the TFS project Lie-St\o rmer Center, the UiT Aurora project MASCOT and 
the RCN project MattNorVeg. The author is grateful for hospitality of IMPAN during the Simons Programme 
on Twistor Theory and its Applications.



\begin{thebibliography}{50}
 
\bibitem{AO}
S.\ Adams, P.\ Olver, {\it Prolonged analytic connected group actions are generically free},
Transf.\ Groups {\bf 23}, 893--913 (2018). 

\bibitem{BB} 
M.\,V.\ Babich, L.\,A.\ Bordag, {\it Projective Differential Geometrical Structure of the Painlevé Equations}, 
J.\ Differential Equations {\bf 157}, no.2, 452--485 (1999).
 
\bibitem{B}
P.\,V.\ Bibikov, {\it On Lie’s problem and differential invariants of ODEs $y''=F(x,y)$},
Funct.\ Analysis Appl.\ {\bf 51}, no. 4, 255--262 (2017).


\bibitem{CSW}
G.\ Cairns, R.\ Sharpe, L.\ Webb, {\it Conformal Invariants for Curves and Surfaces in Three Dimensional Space Forms},
Rocky Mountain J.\ Math.\ {\bf 24} (3), 933--959 (	1994).

\bibitem{C0}
E.\ Cartan, {\it Les systèmes de Pfaff, à cinq variables et les équations aux dérivées partielles du second ordre}, 
Ann. Sci. École Norm. Sup. (3) {\bf 27}, 109--192 (1910).

\bibitem{C1}
E.\ Cartan, {\it La m\'ethode du rep\`ere mobile, la th\'eorie des groupes continus, et les espaces g\'en\'eralis\'es}, 
Actualit\'es Sci.\ Ind.\ {\bf 194}, Hermann, Paris (1935). 


\bibitem{C3}
E.\ Cartan, {\it Les problèmes d'équivalence\/}, OEuvres, Partie II, vol. 2, pp. 1311--1334.

\bibitem{COP}
J.\ Cheh, P.\ Olver, J.\ Pohjanpelto, {\it Algorithms for differential invariants of symmetry groups of differential equations}, 
Found.\ Comput.\ Math.\ {\bf 8}, 501--532 (2008).


\bibitem{Ch2}
C.\ Chevalley, {\it Invariants of finite groups generated by reflections}, Amer.\ J.\ Math.\ {\bf 77}, no.4, 778--782 (1955).

\bibitem{D}
B.\ Doubrov, {\it Projective reparametrization of homogeneous curves}, Archivum Mathematicum {\bf 41}, 129--133 (2005).

\bibitem{DZ}
B.\ Doubrov, I.\ Zelenko, {\it Geometry of curves in generalized flag varieties}, Transformation Groups
{\bf 18}, no.\ 2, 361--383 (2013). 

\bibitem{FO1}
M.\ Fels, P.\ Olver, {\it Moving coframes. I. A practical algorithm}, Acta Appl.\ Math.\ {\bf 51}, no.\ 2, 161--213 (1998).

\bibitem{FO2}
M.\ Fels, P.\ Olver, {\it Moving coframes. II. Regularization and theoretical foundations}, Acta Appl.\ Math.\ {\bf 55}, no. 2, 127--208 (1999).

\bibitem{Gar}
R.\ Gardner, {\it The Method of Equivalence and Its Applications}, SIAM, Philadelphia (1989).

\bibitem{Gre}
M.\,L.\ Green, {\it The moving frame, differential invariants and rigidity theorems for curves in homogeneous spaces}, 
Duke Math.\ J.\ {\bf 45}, no.\ 4, 735--779 (1978).

\bibitem{Gri}
Ph.\ Griffiths, {\it On Cartan's method of Lie groups and moving frames as applied to uniqueness and existence questions in differential geometry}, Duke Math.\ J.\ {\bf 41}, 775--814 (1974).

\bibitem{Ha}
G.-H.~Halphen, {\it Sur les invariants différentiels}, Gauthier-Villars (1878).


\bibitem{HK1}
E.\ Hubert, I.\,A.\ Kogan, {\it Rational invariants of a group action. Construction and rewriting}, 
J.\ Symbolic Comput.\ {\bf 42}, 203--217 (2007).

\bibitem{HK2}
E.\ Hubert, I.\,A.\ Kogan, {\it Smooth and algebraic invariants of a group action. Local and global constructions},
Found.\ Comput.\ Math.\ {\bf 7}, 455--493 (2007).

\bibitem{Kat}
P.\,I.\ Katsylo, {\it Rationality of orbit spaces of irreducible representations of $SL_2$}, Izv.\ Akad.\ Nauk SSSR, 
Ser.\ Mat.\ {\bf 47}, no. 1, 26--36 (1983); English transl.: Math.\ USSR, Izv.\ {\bf 22}, 23--32 A984.

\bibitem{Kem}
G.\ Kemper, {\it The computation of invariant fields and a constructive proof of a theorem by Rosenlicht}, 
Transf.\ Groups {\bf 12}, 657--670 (2007).

\bibitem{KoL}
N.\ Konovenko, V.\ Lychagin, {\it On projective classification of plane curves}, Global and Stochastic Analysis {\bf 1} (2),
241--264 (2011).

\bibitem{KL1}
B. Kruglikov, V. Lychagin, {\it Geometry of Differential equations\/},
Handbook of Global Analysis, Ed. D.Krupka, D.Saunders, Elsevier, (2008) 725--772.

\bibitem{KL2}
B. Kruglikov, V. Lychagin, {\it Global Lie-Tresse theorem}, Selecta Math.
{\bf 22}, 1357--1411 (2016).

\bibitem{KS0}
B. Kruglikov, E.\ Schneider, {\it Invariant divisors and equivariant line bundles},
Forum of Math.\ Sigma {\bf 13}:e68, 1--36 (2025).

\bibitem{KS}
B. Kruglikov, E.\ Schneider, {\it Scalar relative differential invariants}, arXiv:2604.15473 (2026).

\bibitem{KS+}
B. Kruglikov, E.\ Schneider, {\it ODEs whose Symmetry Groups are not Fiber-Preserving}, Journal of Lie theory 
{\bf 33} (4), 1045--1086 (2023).

\bibitem{Ku}
A. Kumpera, {\it Invariants differentiels d'un pseudogroupe de Lie. I-II.\/}
J. Differential Geometry {\bf 10}, no. 2, 289--345; no. 3, 347--416 (1975).



\bibitem{Lie1}
S. Lie, {\sl Theorie der Transformationsgruppen} (Zweiter
Abschnitt, unter Mitwirkung von Prof.Dr.Friederich Engel), Teubner, Leipzig (1890).

\bibitem{Lie2}
S. Lie, {\it Ueber Differentialinvarianten}, Math. Ann. {\bf 24}, no. 4, 537--578 (1884).

\bibitem{MMR}
M.\ Magliaro, L.\ Mari, M.\ Rigoli, {\it On the geometry of curves and conformal geodesics
in the Möbius space}, Ann.\ Glob.\ Anal.\ Geom.\ {\bf 40}, 133--165 (2011).

\bibitem{MB}
G.\ Mari Beffa, {\it Relative and Absolute Differential Invariants for Conformal Curves}, 
Journ.\ Lie Theory {\bf 13}, 213--245 (2003).

\bibitem{O}
P. Olver, {\it Equivalence, Invariants, and Symmetry}, Cambridge University Press (1995).

\bibitem{OP}
P. Olver, J. Pohjanpelto, {\it Differential invariant algebras of Lie pseudo-groups\/},
Adv. Math. {\bf 222}, no. 5, 1746--1792 (2009).

\bibitem{OPV}
P. Olver, J. Pohjanpelto, F. Valiquette, {\it On the structure of Lie pseudo-groups\/},
SIGMA {\bf 5}, 077 (2009).

\bibitem{Ov}
L.\,V. Ovsiannikov, {\it Group analysis of differential
equations\/}, Russian: Nauka, Moscow (1978); Engl. transl.:
Academic Press, New York (1982).

\bibitem{PV}
V.\,L. Popov, E.\,B. Vinberg, {\it Invariant theory\/}, in: {\sl Algebraic geometry. IV\/},
Enciclopaedia of Mathematical Sciences, {\bf 55} (translation from Russian edited by
A.\,N. Parshin, I.\,R. Shafarevich), Springer-Verlag, Berlin (1994).

\bibitem{R}
M. Rosenlicht, {\it Some basic theorems on algebraic groups}, Am. J. Math. {\bf 78}, 401--443 (1956).


\bibitem{T}
T.Y. Thomas, {\it The Differential Invariants of Generalized Spaces},
Cambridge University Press, Cambridge (1934).

\bibitem{Tr1}
A. Tresse, {\it Sur les invariants differentiels des groupes continus de transformations},
Acta Math. {\bf 18}, 1--88 (1894).


\bibitem{V}
A.\,M.\ Vinogradov, {\it Local symmetries and conservation laws}, Acta Appl.\ Math. {\bf 2} (1), 21--78 (1984).

 \end{thebibliography}
\end{document}